\newtheorem{theorem}{Theorem}[section]
\newtheorem{definition}{Definition}
\newtheorem{lemma}[theorem]{Lemma}
\newtheorem{remark}[theorem]{Remark}
\newcommand\Tau{\mathrm{T}}
\newcommand\temp{\theta}
\newcommand\coll{u}
\newcommand\dep{v}
\newcommand\Diff{D}
\newcommand\Cond{K}
\newcommand\Soret{T}
\newcommand\Dufour{F}
\newcommand\Dca{A}
\newcommand\Dcb{B}
\newcommand{\Rdim}[1]{\mathbb{R}^#1}
\newcommandx{\inRange}[2][2=]{\ifstrequal{#2}{}
  {\in\{1,\ldots,#1\}}
  {\in\{#1,\ldots,#2\}}
}
\newcommand\Triangulation{\Tau_h}
\newcommand\fspace{S_h}
\newcommand{\ccoll}{\alpha}
\newcommand{\ctemp}{\beta}
\newcommand{\cdep}{\gamma}
\newcommand{\eltemp}{\tilde{\temp}^h}
\newcommand{\test}{\Phi}
\newcommand{\hsum}[1]{\sum_{#1=1}^{N_h}}
\newcommand{\Time}{I}
\def\namedlabel#1#2{\begingroup#2\def\@currentlabel{#2}\phantomsection\label{#1}\endgroup}\makeatother
\title[Error control of a thermo-diffusion system]{Error control  for the FEM approximation of an upscaled thermo-diffusion system with Smoluchowski interactions }
\author{Oleh Krehel} 
\address{Department of Mathematics and Computer Science, CASA -- Center for Analysis, Scientific computing and Applications, TU Eindhoven, The Netherlands}
\author{Adrian Muntean}
\address{Department of Mathematics and Computer Science, CASA -- Center for Analysis, Scientific computing and Applications,  ICMS -- Institute for Complex Molecular Systems, TU Eindhoven, The Netherlands}
\begin{document}
\maketitle

\begin{abstract} 
We analyze a coupled  system of evolution equations that describes the effect of thermal gradients on  the motion and deposition of $N$ populations
 of colloidal species diffusing and interacting together through Smoluchowski production terms.  This class of systems is particularly useful in studying drug delivery, contaminant transport in 
 complex media, as well as heat shocks thorough permeable media.  The particularity lies in the modeling of the nonlinear and nonlocal coupling between diffusion and thermal conduction. We investigate the semidiscrete as well as the fully discrete {\em a priori} error analysis of the finite elements approximation of 
the weak solution to a thermo-diffusion reaction system posed in a macroscopic domain. The mathematical techniques include energy-like estimates and compactness arguments.
\\
\\
\noindent {\bf Key words.} Thermo-diffusion, Soret and Dufour effects, colloids, Smoluchowski interactions, finite element approximation, convergence analysis, {\em a priori} error control.
\\
\\
\noindent {\bf MSC 2010.} 65N15; 65L60; 80A20
\end{abstract}

\section{Introduction}

We are interested in quantifying  the effect of coupled  macroscopic fluxes\footnote{In this context,  the fluxes are driven by a suitable combination of heat and diffusion gradients \cite{groot1962non}.} on the aggregation, fragmentation and deposition of large populations of colloids traveling through a porous medium. To do so, we are using a  well-posed partly-dissipative coupled  system of  quasilinear parabolic equations posed in a connected open set  $\Omega$ with sufficiently smooth boundary.  The particular structure of the system has been obtained via periodic homogenization techniques in \cite{krehel2014thermo} [see e.g. Ref. \cite{hornung1991diffusion} for a methodological upscaling procedure of reactive flows through arrays of periodic microstructures]. 

The primary motivation of this paper is to develop and analyze appropriate numerical schemes to compute at macroscopic scales approximate solutions to our thermo-diffusion system with Smoluchowski interactions. Accounting for the interplay between heat, diffusion,
attraction-repulsion, and deposition of the colloidal particles is of
paramount importance for a number of applications including the
dynamics of the colloidal suspension in natural or man-made products
(e.g. milk, paints, toothpaste) \cite{Florea}, drug-delivery systems
\cite{andersson2004influences}, hierarchical assembly of biological
tissues \cite{Bart1}, group formation in actively interacting populations \cite{dark}, or heat stocks in porous materials  \cite{benevs2013global}. Further details  on colloids and their practical relevance are given in \cite{elimelech1998particle,krehel2012flocculation}, e.g. 

The discretizations shown in this paper have  been successfully used in \cite{krehel2014multiscale} to capture the effect of multiscale aggregation and deposition  mechanisms on the colloids dynamics traveling within a saturated porous medium in the absence of thermal effects. Now, we are preparing the stage to include the Soret and Dufour transport contributions -- cross-effects between diffusion and heat conduction; for more details on the macroscopic modeling of thermo-diffusion, we refer the reader to the monograph by De Groot and Mazur \cite{groot1962non}. The {\em a priori} estimates  are obtained in a similar fashion as for problems involving reactive flow in porous media (see, for instance, \cite{Florin,Eck_Knabner} and references cited therein), however specifics of the cross transport, interaction terms, and of the non-dissipative (ode) structure play here an important role and need to be treated carefully. For the numerical analysis of case studies in cross diffusion, we refer the reader for instance to \cite{Galiano,andreianov2011analysis} and \cite{Murakawa}. Note that there is not yet a unified mathematical approach to deal with general cross-diffusion or thermo-diffusion systems. Due to the presence of the nonlinearly coupled transport terms, essential difficulties arise in controlling the temperature gradients (and the gradients in the concentrations of colloidal populations) especially in more space dimensions (see e.g. \cite{benevs2013analysis}), the problem sharing many common features with the Stefan-Maxwell system for multicomponent mixtures (compare Refs. \cite{botheweierstrass,MS,Pruess} and the literature mentioned therein). 

In this paper, we investigate the semidiscrete as well as the fully discrete {\em a priori} error analysis of the finite elements approximation of 
the weak solution to a thermo-diffusion reaction system posed in a macroscopic domain that allows for aggregation, dissolution as well as deposition of colloidal species. The main results are summarized in Theorem \ref{ssdd} and Theorem \ref{FD}. The mathematical techniques used in the proofs include energy-like estimates and compactness arguments, exploiting the structure of the nonlocal coupling. Once these {\em a priori} estimates are proven and corrector estimates for the homogenization process explained in  \cite{krehel2014thermo} become available, then the next natural analysis step is to prepare a functional framework for the design optimally convergent MsFEM schemes approximating, very much in the spirit of \cite{Hou1999,Legoll2014}, multiscale formulations of our thermo-diffusion system.

The paper has the following structure: Section \ref{formulation} presents the setting of the model equations and briefly summarizes the meaning of the parameters and model components. We anticipate already at this point  the main results. 
In Section \ref{Technical}, we list the main mathematical analysis aspects of our choice of thermo-diffusion system and  briefly recall a collection of approximation theory results that are used in the sequel. Section \ref{semi} and Section \ref{fully} 
constitute the bulk of the paper. This is the place where we give the details of the proof of the semidiscrete and fully discrete {\em a priori} error  control, i.e. the proofs for Theorem \ref{ssdd} and Theorem \ref{FD}.  


\section{Formulation of the problem. Main results}\label{formulation}

Let $\Time$ denote an open sub-interval within the time interval $(0,T]$, and let $x\in\Omega$ be the variable pointing out the space position.
 The unknowns of the system are the temperature field $\theta$, the
mobile colloidal populations $\coll_i$ ($i\in\{1,\dots, N\}$), and the
immobile (already deposited) colloidal populations $v_i$
($i\in\{1,\dots, N\}$). $N\in\mathbb{N}$ represents the amount of the monomers in 
the largest colloidal species considered. All unknowns depend on both space and time variables $(x,t)\in\Omega\times\Time$. 

\begin{definition}
  Given $\delta>0$, we  introduce the mollifier:
  \begin{align}
    \label{eq:mollifier}
    &J_\delta(s):=
      \begin{cases}
        Ce^{1/(|s|^2-\delta^2)} & \text{if } |s| < \delta,\\
        0                   & \text{if } |s| \ge  \delta,
      \end{cases}
                              \intertext{where the constant $C>0$ is selected such that}
                              \nonumber
                                &\int_{\Rdim{d}}J_\delta=1,
  \end{align}
  see \cite{evans1998partial} for details.
\end{definition}

\begin{definition}
  Using $J_\delta$ from (\ref{eq:mollifier}), define the mollified gradient:
  \begin{equation}
    \label{eq:mollified-gradient}
    \nabla^\delta f:=\nabla
    \left[
      \int_{B(x,\delta)}J_\delta(x-y)f(y)dy
    \right],
  \end{equation}
  where $B(x,\delta)\subset {\mathbb{R}}^d$ is a ball centered in $x\in \Omega$ with radius $\delta$.
  \end{definition}
  
  With the Definition \ref{eq:mollified-gradient} at hand, 
  the following inequalities hold  for all $f\in L^\infty(\Omega)$ and  $g\in L^p(\Omega;\mathbb{R}^d)$ (with $1\le p\le \infty$):
  \begin{align}
    \label{eq:mollified-gradient-property}
    &\|\nabla ^\delta f \cdot  g\|_{L^p(\Omega)} \le  C \|f\|_{L^\infty{\Omega}} \|g\|_{L^p(\Omega;\mathbb{R}^d)}\\
    &\|\nabla ^\delta f\|_{\L^p(\Omega)} \le  C\|f\|_{L^2(\Omega)},
  \end{align}
  where the constant $C$ depends on the choice of the parameter $\delta$ and structure of the mollifier $J_\delta$.

For all $t\in \Time$,  the setting of our thermo-diffusion equations is the following: Find the triplet $(\temp,\coll_i,\dep_i)$ satisfying 
\begin{align}
  \label{eq:num-est.strong-formulation-beg}
  &\partial _t\temp + \nabla \cdot (-\Cond\nabla \temp)-\sum_{i=1}^N \Soret_i\nabla ^\delta \coll_i \cdot  \nabla \temp = 0 
  &&\text{in }\Omega\\
  \label{eq:num-est.strong-formulation-2}
  &\partial _t\coll_i + \nabla \cdot (-\Diff_i\nabla \coll_i) - \Dufour_i\nabla ^\delta\temp\cdot \nabla \coll_i +
    &&
 \\ 
&  + \Dca_i\coll_i - \Dcb_i\dep_i = R_i(\coll_i) &&  \text{in }\Omega\\
  \label{eq:num-est.strong-formulation-3}
  &\partial _t\dep_i = \Dca_i\coll_i - \Dcb_i\dep_i
  &&\text{in }\Omega\\
  &-\Cond\nabla \temp\cdot n=0 
  &&\text{on }\partial \Omega\\
  &\coll_i=0 
  &&\text{on }\partial \Omega\\
  &\temp(0,\cdot )=\temp^0(\cdot ) 
  &&\text{in }\Omega,\\
  &\coll_i(0,\cdot )=\coll_i^0(\cdot ) 
  &&\text{in }\Omega,\\
  \label{eq:num-est.strong-formulation-end}
  &\dep_i(0,\cdot )=\dep_i^0(\cdot ). 
  &&\text{in }\Omega.
\end{align} Here for all $i\in\{1,\dots,N\}$, the parameters $K$, $D_i$, $F_i$ and $T_i$ are effective transport coefficients for heat conduction, colloidal diffusion as well as Soret and Dufour effects. Furthermore, $A_i$ and $B_i$ are effective deposition coefficients. $\temp^0$ is the initial temperature profile, while $\coll_i^0$ and $\dep_i^0$ are the initial concentrations of colloids in mobile, and respectively, immobile state. General motivation on the ingredients of this system (particularly on Soret and Dufour effects) can be found in \cite{groot1962non}. Note that as direct consequence of fixing the threshold $N$, the system coagulates colloidal species (groups) until size $N$ only. 

This particular structure of the system has been derived in \cite{krehel2014thermo} by means of periodic homogenization arguments (two-scale convergence), scaling up the involved physicochemical processes from the pore scale (microscopic level, representative elementary volume (REV)) to a macroscopically  observable scale. 
\begin{remark}
Theorem 4.4 in \cite{krehel2014thermo} ensures the weak solvability of the system (\ref{eq:num-est.strong-formulation-beg})--(\ref{eq:num-est.strong-formulation-end}). Furthermore, under mild assumptions on the data and the parameters 
the weak solution is positive a.e. and satisfies a weak maximum principle. The basic properties of the weak solutions to  (\ref{eq:num-est.strong-formulation-beg})--(\ref{eq:num-est.strong-formulation-end}) are given in Section \ref{Technical}. 
\end{remark}

Denoting by $\temp^h(t)$ the continuous-in-time and
semidiscrete-in-space approximation of $\temp(t)$ and by $\temp^{h,n}$
the corresponding fully discrete approximation, with similar notation for the
other unknowns, we can formulate our main result: 
For all $t, t_n\in
\Time$, the following {\em a priori} estimates hold:
\begin{align}\label{i1}
  \| \temp^h(t)-\temp(t)\|  &+    \sum_{i=1}^N\| \coll_i^h(t)-\coll_i(t)\|  +    \sum_{i=1}^N\| \dep_i^h(t)-\dep_i(t)\|  && \nonumber\\
                        & \le C_1\| \temp^{0,h}-\temp^0\|+ C_2(\| \coll_i^{0,h}-\coll_i^0\| +\| \dep_i^{0,h}-\dep_i^0\|)+C_3h^2&&
\end{align}
and
\begin{align}\label{i2}
  &\| \temp^{h,n}-\temp^{n}\| +\sum_{i=1}^N\| \coll_i^{h,n}-\coll_i^{n}\|
    +\sum_{i=1}^N\| \dep_i^{h,n}-\dep_i^{n}\| \nonumber\\
  &\quad\le
    C_4\| \temp^{h,0}-\temp^0\|
    +C_5\left(\sum_{i=1}^N\| \coll_i^{h,0}-\coll_i^0\|
    +\sum_{i=1}^N\| \dep_i^{h,0}-\dep_i^0\|\right)\nonumber\\
  &\qquad
    +C_6(h^2+\tau).
\end{align}
The constants $C_1,\dots,C_6$ depend on data, but are independent of
the grid parameters $h$ and $\tau$. The hypotheses and the results under which (\ref{i1}) and (\ref{i2}) hold are stated in
Theorem \ref{ssdd} and Theorem \ref{FD}, respectively.  

The
following Sections focus exclusively on the proof of these
inequalities.

\section{Concept of weak solution. Technical preliminaries. Available results.}\label{Technical}

Our concept of weak solution is detailed as follows:
\begin{definition}\label{weak_cont}
  The triplet $(\temp,\coll_i,\dep_i)$ is a solution to (\ref{eq:num-est.strong-formulation-beg})-(\ref{eq:num-est.strong-formulation-end})
  if the following holds:
  \begin{align}
    \label{eq:num-est.weak-beg}
    &\begin{aligned}
      &\temp,\coll_i\in H^1(0,T;L^2(\Omega))\cap L^\infty(0,T;H^1(\Omega)),\\
      &\dep_i\in H^1(0,T;L^2(\Omega)),
    \end{aligned}
    \intertext{and for all $t\in J$ and $\phi\in H^1(\Omega):$} 
      & (\partial _t\temp,\phi)+(\Cond\nabla \temp,\nabla \phi)-\sum_{i=1}^N(\Soret_i\nabla ^\delta\coll_i\cdot \nabla \temp,\phi)=0,\\
    & (\partial _t\coll_i,\phi)+(\Diff_i\nabla \coll_i,\nabla \phi)
      -(\Dufour_i\nabla ^\delta\temp\cdot \nabla \coll_i,\phi)\nonumber\\
    &\qquad+(\Dca_i\coll_i-\Dcb_i\dep_i,\phi)=(R_i(\coll),\phi),\\
    \label{eq:num-est.weak-end}
    & (\partial _t\dep_i,\phi)=(\Dca_i\coll_i-\Dcb_i\dep_i,\phi).
  \end{align}
\end{definition}
 To be able to ensure the solvability of our thermo-diffusion problem, we assume that the following set of assumptions on the data (i.e. (A1)-(A2)) hold true:   
\begin{description}
\item[\namedlabel{eq:num-est.assumption-1}{\bf ($A_1$)}]
  $\Soret_i$, $\Dufour_i$, $\Dca_i$, $\Dcb_i$ are positive constants for 
  $i\inRange{N}$, and there exist $m$ and $M$ such that:
  $0 < m \le  \Cond \le  M$ and $0 < m \le  \Diff_i \le  M$.
  
\item[\namedlabel{eq:num-est.assumption-2}{\bf ($A_2$)}]
  $\temp^0\in L_+^\infty(\Omega)\cap H^2(\Omega), \coll_i^0\in L_+^\infty(\Omega)\cap H^2(\Omega), \dep_i^0\in L_+^\infty(\Gamma)$
  for $i\inRange{N}$.
\end{description}


Fix $h>0$ sufficiently small and let $\Triangulation$ be a
triangulation of $\Omega$ with
\begin{equation*}
 \max_{\tau\in \Triangulation} diam(\tau)\le h.
\end{equation*}
Let $\fspace$ denote the finite dimensional space of continuous
functions on $\Omega$ that reduce to linear functions in each of the
triangles of $\Triangulation$ and vanish on $\partial \Omega$.  Let
$\{P_j\}_{j=1}^{N_h}$ be the interior vertices of $\Triangulation$
with $N_h\in\mathbb{N}$. A function in $\fspace$ is then uniquely
determined by its values at the points $P_j$.  Let $\Phi_j$ be the
pyramid function in $\fspace$ which takes value $1$ at $P_j$, but
vanishes at the other vertices.  Then $\{\Phi_j\}_{j=1}^{N_h}$ forms a
basis for $\fspace$. Consequently, every $\varphi $ in $\fspace$ can be uniquely
represented as
\begin{equation}
  \label{eq:num-est.representation}
  \varphi(x)=\sum_{j=1}^{N_h}\alpha_j\Phi_j(x),\quad\text{with }\alpha_j:=\Phi(P_j), j\in\{1,\dots,N_h\},
\end{equation}
see e.g. Ref. \cite{knabner2003numerical}.

A smooth function $\mathcal{\sigma}$ defined on $\Omega$ which vanishes on $\partial \Omega$ can be 
approximated by its interpolant $I_h\mathcal{\sigma}$ in $\fspace$ defined as:
\begin{equation}
  \label{eq:num-est.interpolant}
  I_h\mathcal{\sigma}(x):=\sum_{j=1}^{N_h}\mathcal{\sigma}(P_j)\Phi_j(x).
\end{equation}

We denote below by $\| \cdot \| $ the norm of the space $L_2(\Omega)$ and by $\| \cdot \| _s$ that in the Sobolev
space $H^s(\Omega)=W_2^s(\Omega)$ with $s\in\mathbb{R}$. If $s=0$ we suppress the index.

We recall that for functions $v$ lying in $H_0^1(\Omega)$, the objects
$\| \nabla v\| $ and $\| v\| _1$ are equivalent norms. Let us also recall
Friedrichs' lemma (see, for instance,
\cite{brenner2008mathematical,ciarlet1978finite}): there exist
constants $c_F>0$ and $C_F>0$ (depending on $\Omega$, see
Ref. \cite{Michlin} for explicit expressions for these constants) such
that
\begin{equation}
  \label{eq:num-est.friedrichs}
  c_F\| \sigma\| _1\le C_F\| \nabla \sigma\| \le \| \sigma\| _1,\quad\forall \sigma\in H_0^1(\Omega).
\end{equation}

The following error estimates for the interpolant  $ I_h\mathcal{\sigma}$ of $\sigma$ [cf. (\ref{eq:num-est.interpolant})] are well-known
(see, e.g., \cite{brenner2008mathematical} or \cite{ciarlet1978finite}), namely for all
$\sigma\in H^2(\Omega)\cap H_0^1(\Omega)$ we have
\begin{align}
  \label{eq:num-est.interpolant-error-1}
  &\| I_h\sigma-\sigma\| \le Ch^2\| \sigma\| _2\\
  \label{eq:num-est.interpolant-error-2}
  &\| \nabla (I_h\sigma-\sigma)\| \le Ch\| \sigma\| _2.
\end{align}
Testing the equations (\ref{eq:num-est.strong-formulation-beg})-(\ref{eq:num-est.strong-formulation-2}) with
$\varphi \in\fspace$ leads to the following semi-discrete weak formulation of (\ref{eq:num-est.strong-formulation-beg})-(\ref{eq:num-est.strong-formulation-end})
 as given in Definition \ref{weak-discrete}.

\begin{definition}\label{weak-discrete}
  The triplet $(\temp^h,\coll_i^h,\dep_i^h)$ is a semidiscrete solution to (\ref{eq:num-est.strong-formulation-beg})-(\ref{eq:num-est.strong-formulation-end})
  if the following identities hold true for all $t\in \Time$ and $\varphi \in \fspace$:
  \begin{align}
    \label{eq:num-est.semi-weak-beg}
    &(\partial _t\temp^h,\varphi ) + (\Cond\nabla \temp^h,\nabla \varphi ) - \sum_{i=1}^N(\Soret_i\nabla ^\delta\coll_i^h\cdot \nabla \temp^h,\varphi )=0\\
    &(\partial _t\coll_i^h,\varphi ) + (\Diff_i\nabla \coll_i^h,\nabla \varphi ) - (\Dufour_i\nabla ^\delta\temp^h\cdot \nabla \coll_i^h,\varphi )\nonumber\\
    &\qquad+(\Dca_i\coll_i^h-\Dcb_i\dep_i^h,\varphi )=(R_i(\coll^h),\varphi )\\
    &(\partial _t\dep_i^h,\varphi ) = (\Dca_i\coll_i^h-\Dcb_i\dep_i^h,\varphi )\\
    &\temp^h(0)=\temp^{0,h}\\
    &\coll_i^h(0)=\coll_i^{0,h}\\
    \label{eq:num-est.semi-weak-end}
    &\dep_i^h(0)=\dep_i^{0,h}.
  \end{align}
Here, $\temp^{0,h}$, $\coll_i^{0,h}$, and $\dep_i^{0,h}$ are suitable 
  approximations of $\temp^0$, $\coll_i^0$, and $\dep_i^0$ respectively
  in the finite dimensional space $\fspace$.
\end{definition}
\begin{remark}
Note that $\dep_i$ as solution to (\ref{eq:num-est.strong-formulation-3}) can be expressed as:
\begin{align}
  \label{eq:num-est.dep-analytical}
  &\dep_i(t)=\left(\int_0^t \Dca_i \coll_i(s) e^{\Dcb_i s} ds\right) e^{-\Dcb_i t} + \dep_i^0 e^{-\Dcb_i t} &\text{for all }t\in \Time.
\end{align}
We will make this substitution later and also use
(\ref{eq:num-est.dep-analytical}) to obtain an error estimate for
$\dep_i^h$ based on the error estimate for $\coll_i^h$. This path can be followed due to the linearity of the equation. If the right-hand side of the ordinary differential equations becomes nonlinear, then a one-sided Lipschitz structure is needed to allow for the Gronwall argument to work.  
\end{remark}
\begin{remark} The existence of solutions in the sense of Definition \ref{weak_cont}  is ensured by periodic homogenization arguments in \cite{krehel2014thermo}, while the existence of solutions in the sense of Definition \ref{weak-discrete} follows by standard arguments. We omit to show the details of the existence proofs. Note that the existence of the respective solutions is nevertheless re-obtained here by straightforward compactness arguments. The proof of uniqueness of both kinds of solutions follows the lines of   \cite{krehel2014thermo}. 
\end{remark}
We represent the approximate solutions to the system (\ref{eq:num-est.strong-formulation-beg})--(\ref{eq:num-est.strong-formulation-end})  by means of the standard Galerkin Ansatz as:
\begin{align*}
 &\coll_i^h(x,t):=\hsum{j}\ccoll_{ij}(t)\test_j(x),\\
 &\temp^h(x,t):=\hsum{j}\ctemp_{j}(t)\test_j(x),\\
 &\dep_i^h(x,t):=\hsum{j}\cdep_{ij}(t)\test_j(x)
\end{align*}
for all $(x,t)\in\Omega\times \Time$. 
Based on the Galerkin projections, the semidiscrete model equations read:

\begin{align}
  \label{eq:num-est.semi-weak-exp-beg}
  &\hsum{j} \ctemp_{ij}'(t) (\test_j,\test_k) +
    \hsum{j} \ctemp_{ij}(\Cond_i\nabla \test_j,\nabla \test_k) \nonumber\\
  &\qquad- \sum_{i=1}^N\Soret_i \hsum{j} \hsum{l} \ctemp_{ij}(t) \ccoll_{il}(t) (\nabla ^\delta\test_l\cdot \nabla \test_j,\test_k)
    = 0\\
  \label{eq:num-est.semi-weak-exp-end}
  &\hsum{j} \ccoll_{ij}'(t) (\test_j,\test_k) +
  \hsum{j} \ccoll_{ij}(\Diff_i\nabla \test_j,\nabla \test_k) \nonumber\\
  &\qquad- \Dufour_i \hsum{j} \hsum{l} \ccoll_{ij}(t) \ctemp_l(t) (\nabla ^\delta\test_l\cdot \nabla \test_j,\test_k)
    = (R_i(\hsum{j}\ccoll_{ij}(t)\test_j),\test_k).
\end{align}

To abbreviate the writing of (\ref{eq:num-est.semi-weak-exp-beg})-(\ref{eq:num-est.semi-weak-exp-end}), we define:
  Define
\begin{align*}
  &\ccoll_i:=\ccoll_i(t)=(\ccoll_{i1}(t),\ldots,\ccoll_{i,N_h}(t))^T,\\
  &\ctemp:=\ctemp(t)=(\ctemp_{1}(t),\ldots,\ctemp_{N_h}(t))^T,\\
  &\cdep_i:=\cdep_i(t)=(\cdep_{i1}(t),\ldots,\cdep_{i,N_h}(t))^T,\\
  &G:=(g_{jk}),\:g_{jk}:=(\test_j,\test_k),\\
  &H_i^\coll:=(h^\coll_{ijk}),\:h^\coll_{ijk}:=(\Diff_i\nabla \test_j,\nabla \test_k),\\
  &H^\temp:=(h^\temp_{jk}),\:h^\temp_{jk}:=(\Cond\nabla \test_j,\nabla \test_k),\\
  &M:=(m_{jkl}),\:m_{jkl}:=(\nabla ^\delta\test_l\cdot \test_j,\test_k).
\end{align*}
Then (\ref{eq:num-est.semi-weak-exp-beg})-(\ref{eq:num-est.semi-weak-exp-end}) become:

\begin{equation}
  \label{eq:num-est.semi-weak-matrix}
  \begin{cases}
    &G\ctemp' + H^\temp\ctemp - \sum_{i=1}^N \Soret_i \ccoll_i^T M \ctemp = 0\\
    &G\ccoll_i' + H_i^\coll\ccoll_i - \Dufour_i \ctemp^T M \ccoll_i
    + G(\Dca_i\ccoll_i - \Dcb_i\cdep_i) \\ 
    & = (R_i(\hsum{j}\ccoll \test_j),\test_k)\\
    &G\cdep_i' = \Dca_iG\ccoll_i - \Dcb_iG\cdep_i\\
    &\ctemp(0)=\ctemp^0\\
    &\ccoll_i(0)=\ccoll_i^0\\
    &\cdep_i(0)=\cdep_i^0.
  \end{cases}
\end{equation}
Note that (\ref{eq:num-est.semi-weak-matrix}) is a nonlinear system of
coupled ordinary differential equations. Based on
\ref{eq:num-est.assumption-1}--\ref{eq:num-est.assumption-2}, we see
not only that $H^\temp$ and $H_i^\coll$ are positive definite, but
also that the right-hand side of the differential equations form a
global Lipschitz continuous function, fact which ensures the
well-posedness of the Cauchy problem
(\ref{eq:num-est.semi-weak-matrix}) on $\Time$ and eventually on its continuation on the whole interval  $(0,T]$; we refer the reader to \cite{Amann} for this kind of extension arguments for ordinary differential equations. Essentially, we get a unique solution
vector
 $$(\ctemp,\ccoll_i,\cdep_i)\in C^1(\bar\Time)^{N^h}\times C^1(\bar\Time)^{NN^h}\times C^1(\bar\Time)^{NN^h}$$ satisfying
(\ref{eq:num-est.semi-weak-matrix}); see \cite{muntean2010multiscale}
for the proof of the global Lipschitz property of the right-hand side
of a similar system of ordinary differential equations.

\section{Semi-discrete error analysis}\label{semi}

Our goal is to estimate the a priori error between the weak solutions of
(\ref{eq:num-est.semi-weak-beg})--(\ref{eq:num-est.semi-weak-end})
and the weak solutions of (\ref{eq:num-est.strong-formulation-beg})--(\ref{eq:num-est.strong-formulation-end}). We proceed very much in the spirit of Thome\'ee \cite{Thomee}; cf., for instance,  Chapter 13 and Chapter 14.  

We write the error as a sum of two terms:

\begin{equation}
  \label{eq:num-est.error-split}
  \temp^h-\temp = (\temp^h-\eltemp) + (\eltemp-\temp) = \psi + \rho.
\end{equation}

In (\ref{eq:num-est.error-split}),
$\eltemp$ is the elliptic projection in $\fspace$ of the exact solution $\temp$,
i.e. $\eltemp$ satisfies for all $t\geq 0$:
\begin{align}
  \label{eq:num-est.elliptic-projection}
  &(\Cond\nabla (\eltemp(t)-\temp(t)),\nabla \varphi )
    -\sum_{i=1}^N(\Soret_i\nabla ^\delta\coll_i\cdot \nabla (\eltemp(t)-\temp(t)),\varphi )=0 && 
\end{align}
for all  $\varphi \in \fspace$.

\begin{lemma}
  \label{lemma:num-est.lemma0}
  Let $k\in C^1(\bar{\Omega})$, $b\in L^\infty(\Omega,\Rdim{3})$, and $\nabla \cdot b\in L^\infty(\Omega)$.
  Suppose that $\gamma\in H_0^1(\Omega)$ is a weak solution to the elliptic boundary-value problem
  \begin{align}
    \label{eq:num-est.lemma0-1}
    -\nabla \cdot (k\nabla \gamma+b\gamma)=\delta\quad\text{in }\Omega,\quad
    \gamma=0\quad\text{on }\partial \Omega.
  \end{align}
  Additionally, assume 
  \begin{align}
    \label{eq:num-est.lemma0-2}
    \partial \Omega\in C^2.
  \end{align}
  Then we have
  \begin{align}
    \label{eq:num-est.lemma0-3}
    \| \gamma\| _2\le C\| \delta\| .
  \end{align}
\end{lemma}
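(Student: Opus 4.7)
The plan is to reduce (\ref{eq:num-est.lemma0-3}) to the classical $H^2$ regularity theory for uniformly elliptic operators in non-divergence form on domains with $C^2$ boundary, bootstrapped from a preliminary $H^1$ energy bound. I tacitly use that $k$ is uniformly bounded below by some $m>0$; this matches the intended application in which $k$ plays the role of $\Cond$ or $\Diff_i$ and (A1) is in force. The first step is to expand the divergence in (\ref{eq:num-est.lemma0-1}) and rewrite the equation pointwise as
\begin{equation*}
  -k\Delta\gamma = \delta + \nabla k\cdot\nabla\gamma + b\cdot\nabla\gamma + (\nabla\cdot b)\gamma.
\end{equation*}
Since $k\in C^1(\bar\Omega)$ is uniformly elliptic and $\partial\Omega\in C^2$, the Agmon--Douglis--Nirenberg estimate for $-k\Delta$ with homogeneous Dirichlet data gives $\|\gamma\|_2 \le C(\|k\Delta\gamma\| + \|\gamma\|_1)$.

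To control the lower-order term $\|\gamma\|_1$, I would test the weak form of (\ref{eq:num-est.lemma0-1}) against $\gamma$ itself and exploit the identity
\begin{equation*}
  \int_\Omega (b\gamma)\cdot\nabla\gamma
    = \tfrac{1}{2}\int_\Omega b\cdot\nabla(\gamma^2)
    = -\tfrac{1}{2}\int_\Omega(\nabla\cdot b)\gamma^2,
\end{equation*}
which is valid because $\gamma$ vanishes on $\partial\Omega$. Combining ellipticity of $k$ with Cauchy--Schwarz on the source and Friedrichs' inequality (\ref{eq:num-est.friedrichs}) leads to an inequality of the form $m\|\nabla\gamma\|^2 \le C\|\delta\|\,\|\nabla\gamma\| + C\|\nabla\cdot b\|_\infty \|\nabla\gamma\|^2$, from which $\|\gamma\|_1 \le C\|\delta\|$ follows either by direct absorption when the constants permit or, in full generality, via the Fredholm alternative together with the uniqueness of $\gamma$ that is standing throughout the paper (and which is in any case needed for (\ref{eq:num-est.elliptic-projection}) to define a genuine projection).

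Once $\|\gamma\|_1 \le C\|\delta\|$ is secured, the right-hand side of the non-divergence identity is bounded in $L^2$ by $C(\|\delta\| + \|\gamma\|_1) \le C\|\delta\|$, using $\nabla k, b, \nabla\cdot b \in L^\infty$. Inserting this into the regularity estimate of step one delivers $\|\gamma\|_2 \le C\|\delta\|$, which is exactly (\ref{eq:num-est.lemma0-3}). The only real obstacle in the argument is the drift term $b\cdot\nabla\gamma$ in the $H^1$ energy estimate: without a sign condition on $\nabla\cdot b$, coercivity of the bilinear form is not automatic and one must either impose a smallness hypothesis on the lower-order coefficients or appeal to Fredholm theory. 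Once the $H^1$ control is achieved, the upgrade to $H^2$ is routine, since $b\cdot\nabla$ and multiplication by $\nabla\cdot b$ behave as compact perturbations of the principal part $-k\Delta$.
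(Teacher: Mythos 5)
Your argument is correct in substance, but it takes a different route from the paper: the paper's entire proof is a citation of the global $H^2$-regularity theorem for second-order elliptic operators with $C^1$ leading coefficients and $L^\infty$ lower-order coefficients on a $C^2$ domain (Theorem 4 in \cite{evans1998partial}, applied after writing $-\nabla\cdot(b\gamma)=-b\cdot\nabla\gamma-(\nabla\cdot b)\gamma$, which is exactly why $\nabla\cdot b\in L^\infty$ is assumed), whereas you reconstruct that theorem by splitting off the principal part and bootstrapping from an $H^1$ energy bound. What your version buys is transparency: it makes explicit both the tacit uniform ellipticity $k\ge m>0$ (implicit in the paper via \ref{eq:num-est.assumption-1}) and, more importantly, the fact that the clean bound (\ref{eq:num-est.lemma0-3}) does not follow from coercivity alone --- the cited theorem only yields $\|\gamma\|_2\le C(\|\delta\|+\|\gamma\|)$, and dropping the zero-order term requires either uniqueness via the Fredholm alternative or a smallness condition on the drift, precisely the obstacle you flag; the authors themselves hedge against this by supplying Lemma \ref{eq:num-est.lemma0-alt} with the explicit hypothesis $m>\|b\|_\infty C_F$ and by using Friedrichs' inequality (\ref{eq:num-est.friedrichs}) there, so your caveat mirrors a limitation of the lemma as stated rather than a defect of your proof. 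One presentational point to tighten: you should not ``rewrite the equation pointwise'' as $-k\Delta\gamma=\dots$ before you know $\gamma\in H^2$; the clean logical order is to observe that $\gamma\in H^1_0$ weakly solves $-\nabla\cdot(k\nabla\gamma)=f$ with $f:=\delta+b\cdot\nabla\gamma+(\nabla\cdot b)\gamma\in L^2$ and $\|f\|\le\|\delta\|+C\|\gamma\|_1$, and then invoke the $H^2$ regularity estimate for the divergence-form principal part (whose proof, by difference quotients, does not presuppose $H^2$); with that reordering your perturbation-plus-energy argument is a complete and self-contained proof of the same fact the paper obtains by citation.
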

\begin{proof}
  The proof of this result is a particular case of the proof of Theorem 4 given in \cite[][317]{evans1998partial}. We omit to repeat the arguments here. 
\end{proof}
\begin{remark}
  The condition (\ref{eq:num-est.lemma0-2}) can be relaxed to $\Omega$ being 
  a convex polygon, see \cite[][147]{grisvard2011elliptic} (compare Theorem 3.2.1.2 and Theorem 3.2.1.3).
\end{remark}

\begin{lemma}
  \label{eq:num-est.lemma0-alt}
  Let $k\in L^2(\Omega)$ and $b\in L^\infty(\Omega,\Rdim{3})$, and $k(x)\ge m>0$, and $m>\| b\| _\infty C_F$, where
  $C_F$ is the constant entering (\ref{eq:num-est.friedrichs}).
  Suppose that $\gamma\in H_0^1(\Omega)$ is a weak solution of the elliptic boundary-value problem
  \begin{align}
    \label{eq:num-est.lemma0-alt-1}
    -\nabla \cdot (k\nabla \gamma+b\gamma)=\delta\quad\text{in }\Omega,\quad
    \gamma=0\quad\text{on }\partial \Omega.
  \end{align}
  Then we have
  \begin{align}
    \label{eq:num-est.lemma0-alt-2}
    \| \gamma\| _2\le C\| \delta\|. 
  \end{align}
\end{lemma}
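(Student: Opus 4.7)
The plan is to derive the stability estimate by a direct energy method: test the weak formulation against $\gamma$ itself and exploit the structural smallness condition $m > \|b\|_\infty C_F$ to absorb the first-order drift term into the coercive principal part. Since the hypotheses give no control on $\nabla\cdot b$ and only the minimal regularity $k \in L^2$, no elliptic $H^2$-regularity theory can be invoked here (in contrast to Lemma \ref{lemma:num-est.lemma0}); what is available, and what I will establish, is an $L^2$ (in fact $H^1$) stability bound obtained as a consequence of the sharper gradient estimate $\|\nabla\gamma\| \le C\|\delta\|$.

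Concretely, I take $\varphi = \gamma \in H_0^1(\Omega)$ in the weak form of (\ref{eq:num-est.lemma0-alt-1}) to obtain the identity
\begin{equation*}
\int_\Omega k|\nabla\gamma|^2\,dx + \int_\Omega (b\gamma)\cdot\nabla\gamma\,dx = \int_\Omega \delta\,\gamma\,dx.
\end{equation*}
The pointwise lower bound $k \ge m$ yields $\int_\Omega k|\nabla\gamma|^2 \ge m\|\nabla\gamma\|^2$. The drift term is handled by Cauchy--Schwarz followed by Friedrichs' inequality (\ref{eq:num-est.friedrichs}),
\begin{equation*}
\left|\int_\Omega (b\gamma)\cdot\nabla\gamma\,dx\right| \le \|b\|_\infty\,\|\gamma\|\,\|\nabla\gamma\| \le \|b\|_\infty\,C_F\,\|\nabla\gamma\|^2,
\end{equation*}
and the right-hand side is bounded by $\|\delta\|\,\|\gamma\| \le C_F\,\|\delta\|\,\|\nabla\gamma\|$. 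Collecting these three estimates produces
\begin{equation*}
(m - \|b\|_\infty C_F)\,\|\nabla\gamma\|^2 \;\le\; C_F\,\|\delta\|\,\|\nabla\gamma\|.
\end{equation*}
The hypothesis $m > \|b\|_\infty C_F$ ensures the prefactor on the left is strictly positive; dividing through gives $\|\nabla\gamma\| \le \tilde C\,\|\delta\|$, and one further invocation of Friedrichs transfers this to the claimed bound on $\gamma$ itself, with a constant depending only on $m$, $\|b\|_\infty$, and the domain through $C_F$.

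The essential difficulty, and the reason the hypothesis is posed as it is, lies in the fact that the drift term cannot be rewritten via integration by parts: the available $L^\infty$ control on $b$ does not extend to $\nabla\cdot b$, so the standard trick $\int (b\gamma)\cdot\nabla\gamma = -\tfrac12\int (\nabla\cdot b)\,\gamma^2$ is unavailable. Accordingly, the only route to coercivity is the direct Friedrichs-based absorption, which closes precisely when $m - \|b\|_\infty C_F > 0$. No compactness, Fredholm alternative, or higher regularity argument is needed; the estimate is a quantitative Lax--Milgram-type bound relying solely on coercivity and the Poincaré--Friedrichs constant of $\Omega$.
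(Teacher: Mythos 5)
Your proof is correct and follows essentially the same route as the paper's own: test the weak formulation with $\gamma$ itself, use $k\ge m$ for coercivity, bound the drift term by $\|b\|_\infty C_F\|\nabla\gamma\|^2$ via Friedrichs' inequality, and absorb it using the smallness hypothesis $m>\|b\|_\infty C_F$. The one point worth recording is that both your argument and the paper's deliver only an $H^1$ bound on $\gamma$, whereas the lemma as stated asserts $\|\gamma\|_2\le C\|\delta\|$ in the paper's $H^2$-norm convention; you flag this explicitly (and rightly observe that $k\in L^2$ rules out elliptic regularity theory), while the paper's proof passes over the discrepancy in silence.
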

\begin{proof} We can directly verify that
  \begin{align*}
    &m\| \gamma\| ^2\le (k\nabla \gamma,\nabla \gamma)=(\delta,\gamma)+(b\cdot \gamma,\nabla \gamma)\\
    &\quad\le \| \delta\| \| \gamma\| +\| b\| _\infty\| \gamma\| \| \nabla \gamma\| \\
    &\quad\le \| \delta\| \| \gamma\| +\| b\| _\infty C_F\| \nabla \gamma\| ^2.
  \end{align*}
  Here, we used the Friedrichs inequality (\ref{eq:num-est.friedrichs}).
 Since $m>\| b\| _\infty C_F$, we have (\ref{eq:num-est.lemma0-alt-2}).
\end{proof}

\begin{lemma}
  \label{lemma:num-est.lemma1}
  Take $k\in L^\infty(\Omega)\cap H^1(\Omega)$ and $b\in L^\infty(\Omega,\Rdim{3})\cap H^1(\Omega,\Rdim{3})$ and assume that there exist $m$ and $M$ such that $0<m\le k(x)\le M$ for
  all $x\in \Omega$. Let $w\in H^2(\Omega)\cap H_0^1(\Omega)$ satisfying 
  \begin{align}
    \label{eq:num-est.lemma1-1}
    &(k\nabla (w_h-w),\nabla \varphi )-(b\cdot \nabla (w_h-w),\varphi )=0 && \mbox{ for all } \varphi \in S_h.
  \end{align}
  Then the following estimates hold:
  \begin{align}
    \label{eq:num-est.lemma1-2}
    &\| \nabla (w_h-w)\| \le C_1 h \| w\| _2\\
    \label{eq:num-est.lemma1-3}
    &\| w_h-w\| \le C_0 h^2 \| w\| _2.
  \end{align}
  Here, the constant $C_1$ depends on $\Triangulation$, $m$, and $M$.
  The constant $C_0$ depends additionally
  on the upper bound of $\nabla k$ and $b$ in the corresponding  $L^\infty$-norm.
\end{lemma}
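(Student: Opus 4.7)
The plan is to recognize (\ref{eq:num-est.lemma1-1}) as Galerkin orthogonality with respect to the non-symmetric bilinear form
\[
a(u,\varphi):=(k\nabla u,\nabla\varphi)-(b\cdot\nabla u,\varphi),\qquad u\in H^1_0(\Omega),\ \varphi\in S_h,
\]
and to adapt the classical C\'ea-plus-Aubin--Nitsche strategy for elliptic projections (cf.\ Thom\'ee~\cite{Thomee}). I would split the error using the Lagrange interpolant,
\[
w_h-w=(w_h-I_hw)+(I_hw-w)=:\chi+\eta,
\]
where $\chi\in S_h$ and $\eta$ is controlled by (\ref{eq:num-est.interpolant-error-1})--(\ref{eq:num-est.interpolant-error-2}).

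For the $H^1$ estimate (\ref{eq:num-est.lemma1-2}), I would test the orthogonality relation $a(\chi+\eta,\chi)=0$ against $\chi$ to obtain $a(\chi,\chi)=-a(\eta,\chi)$. Using $k\ge m$ on the principal term, bounding $(b\cdot\nabla\chi,\chi)$ by $\|b\|_\infty$ combined with Friedrichs' inequality (\ref{eq:num-est.friedrichs}), and using Young's inequality to absorb cross terms into $m\|\nabla\chi\|^2$, the right-hand side is controlled by $C\|\nabla\eta\|\bigl(\|\nabla\chi\|+\|\chi\|\bigr)$. Invoking (\ref{eq:num-est.interpolant-error-2}) then yields $\|\nabla\chi\|\le Ch\|w\|_2$, and the triangle inequality together with (\ref{eq:num-est.interpolant-error-2}) produces (\ref{eq:num-est.lemma1-2}).

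For the $L^2$ bound (\ref{eq:num-est.lemma1-3}) I would employ an Aubin--Nitsche duality argument. Let $\gamma\in H^1_0(\Omega)$ solve the adjoint problem
\[
-\nabla\cdot(k\nabla\gamma)+\nabla\cdot(b\gamma)=w_h-w\text{ in }\Omega,\qquad\gamma=0\text{ on }\partial\Omega,
\]
so that Lemma~\ref{lemma:num-est.lemma0}, applied with the sign of $b$ reversed, supplies $\|\gamma\|_2\le C\|w_h-w\|$. A short integration by parts yields the duality identity $(v,w_h-w)=a(v,\gamma)$ for every $v\in H^1_0(\Omega)$. Taking $v=w_h-w$ and invoking Galerkin orthogonality to subtract $I_h\gamma\in S_h$,
\[
\|w_h-w\|^2=a(w_h-w,\gamma-I_h\gamma)\le C\|\nabla(w_h-w)\|\bigl(\|\nabla(\gamma-I_h\gamma)\|+\|\gamma-I_h\gamma\|\bigr).
\]
Inserting the $H^1$ bound from the previous step, the interpolation estimates applied to $\gamma$, and the regularity bound $\|\gamma\|_2\le C\|w_h-w\|$, the right-hand side becomes $Ch^2\|w\|_2\|w_h-w\|$, and dividing through gives (\ref{eq:num-est.lemma1-3}).

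The delicate point is the coercivity of $a$ on $H^1_0(\Omega)$: absorbing $(b\cdot\nabla\chi,\chi)$ into $m\|\nabla\chi\|^2$ is immediate only when $m>\|b\|_\infty C_F$, in the spirit of Lemma~\ref{eq:num-est.lemma0-alt}. In the absence of such a restriction one has to resort to a G\aa{}rding-type inequality and close a short bootstrap in which the spurious $L^2$ residue is reabsorbed through the $L^2$ error bound itself, valid for $h$ sufficiently small. The additional dependence of $C_0$ on $\|\nabla k\|_\infty$ and $\|b\|_\infty$ enters exactly through this passage, since those quantities appear in the duality identity and in the treatment of $\gamma-I_h\gamma$.
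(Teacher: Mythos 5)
Your proposal is correct and follows essentially the same route as the paper: Galerkin orthogonality plus the interpolation estimates for the energy bound, and an Aubin--Nitsche duality argument with the adjoint problem $-\nabla\cdot(k\nabla\gamma-b\gamma)=w_h-w$ together with the elliptic regularity of Lemma \ref{lemma:num-est.lemma0} (or Lemma \ref{eq:num-est.lemma0-alt}) for the $L^2$ bound. The only nuance is that the paper never assumes coercivity of the nonsymmetric form: it keeps the residual $\| w_h-w\|$ term in the gradient estimate and closes the coupled estimates via the duality bound $\| w_h-w\| \le Ch\| \nabla (w_h-w)\|$ and absorption for $h$ small, which is exactly the bootstrap you describe as your fallback.
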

\begin{proof}
 We proceed very much in the spirit of Ciarlet estimates.  By \ref{eq:num-est.assumption-1}, we have  that  
  \begin{align*}
    &m\| \nabla (w_h-w)\| ^2\le (k\nabla (w_h-w),\nabla (w_h-w))=\\
    &\quad (k\nabla (w_h-w),\nabla (w_h-\varphi )) + (k\nabla (w_h-w),\nabla (\varphi -w))=\\
    &\quad (b\cdot \nabla (w_h-w),w_h-\varphi )+(k\nabla (w_h-w),\nabla (\varphi -w))\le \\
    &\quad \| b\| _\infty \| \nabla (w_h-w)\| \| w_h-\varphi \|  + M\| \nabla (w_h-w)\| \| \nabla (\varphi -w)\|
  \end{align*}
  Take $\varphi :=I_hw$ - the Clement interpolant of $w$. Then we have:
  \begin{eqnarray}
    m\| \nabla (w_h-w)\| &\le& \| b\| _\infty(\| w_h-w\| +\| I_hw-w\| ) \nonumber\\
    &+ & M\| \nabla (I_hw-w)\| \le C_1h\| w\| _2,
  \end{eqnarray}
  which yields
  \begin{eqnarray}
    \label{eq:num-est.lemma1-4}
    \| \nabla (w_h-w)\| &\le & (C_1h+C_2\| b\| _\infty h^2)\| w\| _2\nonumber\\
    & + & \frac{\| b\| _\infty}{m}\| w_h-w\|. 
  \end{eqnarray}
 It is worth noting that  (\ref{eq:num-est.lemma1-4}) leads to (\ref{eq:num-est.lemma1-2}) when we show later that (at least) $$\| w_h-w\| \le Ch\| w\| _2.$$
  
  Next, we show (\ref{eq:num-est.lemma1-3}) using a  duality argument.
  Let $\gamma\in H_0^1(\Omega)$ solve the problem
  \begin{align*}
    -\nabla \cdot (k\nabla \gamma-b\gamma)=\delta\quad\text{in }\Omega,\quad
    \gamma=0\quad\text{on }\partial \Omega.
  \end{align*}
  Then 
  \begin{align*}
    &(w_h-w,\delta)=(w_h-w,-\nabla \cdot (k\nabla \gamma-b\gamma))\\
    &\quad=(k\nabla (w_h-w),\nabla \gamma)-(b\cdot \nabla (w_h-w),\gamma)\\
    &\quad=(k\nabla (w_h-w),\nabla (\gamma-\varphi ))-(b\cdot \nabla (w_h-w),\gamma-\varphi )\\
    &\quad+(k\nabla (w_h-w),\nabla \varphi )-(b\cdot \nabla (w_h-w),\varphi ).
  \end{align*}
  Let $\varphi :=I_h\gamma$ and use (\ref{eq:num-est.lemma1-1}):
  \begin{align*}
    (w_h-w,\delta)&\le M\| \nabla (w_h-w)\| \| \nabla (\gamma-I_h\gamma)\| \\
    &\quad+\| b\| _\infty\| \nabla (w_h-w)\| \| \gamma-I_h\gamma\| .
  \end{align*}
  Using the standard approximation properties for $I_h\gamma$, we get:
  \begin{align}
    \label{eq:num-est.lemma1-5}
    (w_h-w,\delta)\le (C_1Mh+C_2\| b\| _\infty h^2)\| \gamma\| _2\| \nabla (w_h-w)\| .
  \end{align}

  Using $\delta:=w_h-w$ in (\ref{eq:num-est.lemma1-5}), 
  and either Lemma \ref{lemma:num-est.lemma0} or Lemma \ref{eq:num-est.lemma0-alt},
  we obtain:
  \begin{align}
    \label{eq:num-est.lemma1-6}
    \| w_h-w\| \le (C_1Mh+C_2\| b\| _\infty h^2)C_3\| \nabla (w_h-w)\| .
  \end{align}
  Using (\ref{eq:num-est.lemma1-6}) in (\ref{eq:num-est.lemma1-4}) leads to:
  \begin{align}
    \label{eq:num-est.lemma1-7}
    \| \nabla (w_h-w)\| \le C_1h\| w\| _2 + C_2h\| \nabla (w_h-w)\| .
  \end{align}
  After solving the recurrence in (\ref{eq:num-est.lemma1-7}),
  (\ref{eq:num-est.lemma1-2}) is proven, and hence
  (\ref{eq:num-est.lemma1-3}) follows from (\ref{eq:num-est.lemma1-6}).
\end{proof}

\begin{lemma}
  \label{lemma:num-est.lemma2}
  Let $\eltemp$ be defined by (\ref{eq:num-est.elliptic-projection}),
  and let $\rho:=\eltemp-\temp$. Then the following estimates hold:
  \begin{align}
    \label{eq:num-est.lemma2-1}
    &\| \rho(t)\| +h\| \nabla \rho(t)\| \le C(\temp)h^2     &&t\in \Time,\\
    \label{eq:num-est.lemma2-2}
    &\| \rho_t(t)\| +h\| \nabla \rho_t(t)\| \le C(\temp)h^2 &&t\in \Time.
  \end{align}
\end{lemma}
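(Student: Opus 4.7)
The plan is to reduce both bounds to Lemma \ref{lemma:num-est.lemma1} pointwise in time $t\in I$. For (\ref{eq:num-est.lemma2-1}), I identify (\ref{eq:num-est.elliptic-projection}) with the template (\ref{eq:num-est.lemma1-1}) by setting $k:=K$ and $b(t):=\sum_{i=1}^N T_i\nabla^\delta c_i(t)$. Assumption \ref{eq:num-est.assumption-1} ensures the ellipticity and boundedness of $k$, while the mollifier property (\ref{eq:mollified-gradient-property}) together with $c_i(t)\in L^2(\Omega)$ yields $b(t)\in L^\infty(\Omega,\mathbb{R}^3)$ uniformly in $t$; an analogous bound for $b(t)$ in $H^1(\Omega,\mathbb{R}^3)$ follows by applying the same smoothing argument to one further derivative of $J_\delta$. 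Since $\theta(t)\in H^2(\Omega)\cap H_0^1(\Omega)$ by elliptic regularity (inherited from \ref{eq:num-est.assumption-2} and the framework of \cite{krehel2014thermo}), Lemma \ref{lemma:num-est.lemma1} applies and gives $\|\rho(t)\|+h\|\nabla\rho(t)\|\le Ch^2\|\theta(t)\|_2$; absorbing $\|\theta\|_{L^\infty(I;H^2)}$ into the shorthand $C(\theta)$ yields (\ref{eq:num-est.lemma2-1}).

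For (\ref{eq:num-est.lemma2-2}) I would differentiate (\ref{eq:num-est.elliptic-projection}) in $t$ to obtain, for every $\varphi\in S_h$,
\begin{equation*}
(K\nabla\rho_t,\nabla\varphi)-\sum_{i=1}^N(T_i\nabla^\delta c_i\cdot\nabla\rho_t,\varphi)=\sum_{i=1}^N(T_i\nabla^\delta(\partial_t c_i)\cdot\nabla\rho,\varphi).
\end{equation*}
The left-hand side is exactly the bilinear form underlying Lemma \ref{lemma:num-est.lemma1} applied to $\rho_t$, while the right-hand side is a forcing $(f,\varphi)$ with $f:=\sum_i T_i\nabla^\delta(\partial_t c_i)\cdot\nabla\rho$. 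By (\ref{eq:mollified-gradient-property}) and $\partial_t c_i\in L^2(\Omega)$ (guaranteed by Definition \ref{weak_cont}), one has $\|f\|\le C\|\partial_t c_i\|\,\|\nabla\rho\|$, and inserting the first estimate gives $\|f\|\le C(\theta)h$. I would then repeat the energy--duality chain of Lemma \ref{lemma:num-est.lemma1}: split $\rho_t=(\tilde\theta_t^h-I_h\theta_t)+(I_h\theta_t-\theta_t)$, test with $\varphi:=\tilde\theta_t^h-I_h\theta_t\in S_h$, exploit coercivity of $K$ together with the interpolation estimates (\ref{eq:num-est.interpolant-error-1})--(\ref{eq:num-est.interpolant-error-2}) applied to $\theta_t(t)\in H^2\cap H_0^1$, track the extra term as an $(f,\tilde\theta_t^h-I_h\theta_t)$ contribution bounded via Cauchy--Schwarz, and absorb lower-order terms to reach $\|\nabla\rho_t\|\le C(\theta)h$. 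The $L^2$ bound $\|\rho_t\|\le C(\theta)h^2$ then follows from the Aubin--Nitsche-type duality step used in Lemma \ref{lemma:num-est.lemma1}, applied with source $\delta:=\rho_t$ and the dual elliptic problem handled by Lemma \ref{lemma:num-est.lemma0} (or Lemma \ref{eq:num-est.lemma0-alt}).

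The main obstacle I anticipate is the coupling of the $\rho_t$-equation back to $\rho$ through $f$: handled naively, this would drop the order to $O(h)$ and destroy the target $h^2$-rate for $\|\rho_t\|$. The remedy is strictly sequential --- prove (\ref{eq:num-est.lemma2-1}) first, then feed the bound $\|\nabla\rho\|=O(h)$ into $\|f\|$ before running the energy--duality argument, so that the forcing contributes only $O(h^2)$ to $\|\rho_t\|$ after the duality gain of an extra power of $h$. A secondary prerequisite, silently absorbed into $C(\theta)$, is the uniform-in-$t$ regularity $\theta_t(t)\in H^2(\Omega)\cap H_0^1(\Omega)$ and $\partial_t c_i(t)\in L^2(\Omega)$; both are inherited from the parabolic regularity available in \cite{krehel2014thermo}.
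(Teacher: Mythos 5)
Your proposal matches the paper's proof in essence: the first bound is obtained by applying Lemma \ref{lemma:num-est.lemma1} to the projection (\ref{eq:num-est.elliptic-projection}) with $k:=\Cond$ and $b:=\sum_i \Soret_i\nabla^\delta\coll_i$, and the second by differentiating that identity in time, treating the $b_t\cdot\nabla\rho$ contribution as a forcing controlled through the already-proven $O(h)$ bound on $\|\nabla\rho\|$, and then running the same energy plus Aubin--Nitsche duality chain (with Lemma \ref{lemma:num-est.lemma0} or Lemma \ref{eq:num-est.lemma0-alt}) that the paper uses. The only cosmetic difference is ordering: in the paper the energy step yields $\|\nabla\rho_t\|^2\le C_1h^2+C_2\|\rho_t\|^2$ and the duality bound $\|\rho_t\|\le Ch\|\nabla\rho_t\|$ is needed to close the recurrence, rather than the gradient bound being absorbed outright before duality, but you invoke both ingredients, so the argument closes the same way.
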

\begin{proof}
  Using Lemma \ref{lemma:num-est.lemma1}, we have that $\| \nabla \rho\| \le C_1h\| \temp\| _2$ and
  $\rho\le C_0h^2\| \temp\| _2$, so (\ref{eq:num-est.lemma2-1}) follows by adding these estimates.

  To obtain (\ref{eq:num-est.lemma2-2}), we differentiate
  (\ref{eq:num-est.elliptic-projection}) with respect to time:
  \begin{align*}
    (k\nabla \rho_t,\nabla \varphi )-(b_t\cdot \nabla \rho+b\cdot \nabla \rho_t,\varphi )=0
  \end{align*}
  Assuming $k$ uniformly bounded, which it is, since it doesn't depend
  on $\theta$ in our case:
  \begin{align*}
    &m\| \nabla \rho_t\| ^2\le (k\nabla \rho_t,\nabla \rho_t)=(k\nabla \rho_t,\nabla (\eltemp_t-\varphi +\varphi -\temp_t))\\
    &\quad=(k\nabla \rho_t,\nabla (\varphi -\temp_t))+(k\nabla \rho_t,\nabla (\eltemp-\varphi ))\\
    &\quad=(k\nabla \rho_t,\nabla (\varphi -\temp_t))+(b_t\cdot \nabla \rho+b\cdot \nabla \rho_t,\eltemp-\varphi )
  \end{align*}
  We have used (\ref{eq:num-est.elliptic-projection}) in the last
  equation since $(\eltemp-\varphi)\in \fspace$.  Thus we get that 
  \begin{align*}
    m\| \nabla \rho_t\| ^2\le M\| \nabla \rho_t\| \| \nabla (\varphi -\temp_t)\| +(C_1(b)\| \nabla \rho\| +C_2(b)\| \nabla \rho_t\| )\| \eltemp-\varphi \| 
  \end{align*}
  Now, take $\varphi :=I_h\temp_t$ to obtain:
  \begin{align*}
    &m\| \nabla \rho_t\| \le M\| \nabla \rho_t\| Ch\| \temp_t\| _2+(C_1(b)\| \nabla \rho\| +C_2(b)\| \nabla \rho_t\| )(\| \rho_t\| +Ch\| \temp_t\| _2)\\
    &\quad\le \frac{m}{2}\| \nabla \rho_t\| ^2+Ch^2\| \temp_t\| _2^2+Ch(\| \rho_t\| +Ch\| \temp_t\| _2)\\
    &\qquad+C_2(u)\| \nabla \rho_t\| \| \rho_t\| +C_2(u)Ch\| \nabla \rho_t\| \| \temp_t\| _2.
  \end{align*}
  Using Young's inequality a few times, it finally follows that:
  \begin{align}
    \label{eq:num-est.lemma2-3}
    \| \nabla \rho_t\| ^2\le C_1h^2+C_2\| \rho_t\| ^2,
  \end{align}
  where $C_1$ and $C_2$ are independent of $h$.

  Now, we use the duality argument as in Lemma \ref{lemma:num-est.lemma1} to gain:
  \begin{align*}
    &(\rho_t,\delta)=(\rho_t,-\nabla \cdot (k\nabla \gamma-b\gamma))=(k\nabla \rho_t,\nabla \gamma)-(b\cdot \nabla \rho_t,\gamma)\\
    &\quad=(k\nabla \rho_t,\nabla (\gamma-\varphi ))-(b\cdot \nabla \rho_t,\gamma-\varphi )+(k\cdot \nabla \rho_t,\nabla \varphi )-(b\cdot \nabla \rho_t,\varphi )\\
    &\quad=(k\nabla \rho_t,\nabla (\gamma-\varphi ))-(b\cdot \nabla \rho_t,\gamma-\varphi ).
  \end{align*}
  Choosing $\varphi :=I_h\gamma$ and $\delta:=\rho_t$ yields
  \begin{align*}
    &\| \rho_t\| ^2\le C_1\| \nabla \rho_t\| (Mh+\| b\| _\infty h^2)\| \gamma\| _2\\
    &\quad\le C_2\| \nabla \rho_t\| (Mh+\| b\| _\infty h^2)\| \delta\| \le \\
    &\quad\le C_2\| \nabla \rho_t\| (Mh+\| b\| _\infty h^2)\| \rho_t\|.
  \end{align*}
  We now see that
  \begin{align}
    \label{eq:num-est.lemma2-4}
    \| \rho_t\| \le C(\coll,\temp)h\| \nabla \rho_t\| .
  \end{align}
  Combining (\ref{eq:num-est.lemma2-3}) and (\ref{eq:num-est.lemma2-4}) leads to convenient recurrence 
  relations, thus proving the statement of  the Lemma.
\end{proof}

\begin{lemma}
  \label{lemma:num-est.lemma3}
  Let $\eltemp$ be defined by (\ref{eq:num-est.elliptic-projection}).
  Then:
  \begin{align}
    \label{eq:num-est.lemma3-1}
    &\| \nabla \eltemp(t)\| _{\infty}\le C(\temp)&&t\in \Time.
  \end{align}
\end{lemma}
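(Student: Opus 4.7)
The target is an $L^\infty$ bound on $\nabla \eltemp$, uniform in $h$, that will later be used to control the nonlinear cross-terms $\nabla^\delta \coll \cdot \nabla \eltemp$. My plan is to insert the Lagrange interpolant $I_h \temp$ and split
\[
\nabla \eltemp \;=\; \nabla I_h\temp \;+\; \nabla(\eltemp - I_h\temp),
\]
so that one needs only a $W^{1,\infty}$-stability estimate for $I_h$ plus a uniform $L^\infty$-bound on the discrete ``fluctuation'' $\nabla(\eltemp - I_h\temp)$, which lives entirely in $\fspace$ and can therefore be treated with an inverse inequality.

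First I would control $\|\nabla I_h \temp\|_\infty$. By assumption \ref{eq:num-est.assumption-2} and the Sobolev embedding $H^2(\Omega)\hookrightarrow W^{1,\infty}(\Omega)$ (valid for the space dimensions considered), $\temp(t) \in W^{1,\infty}(\Omega)$; together with the standard interpolation stability on a quasi-uniform $\Triangulation$ one obtains $\|\nabla I_h \temp(t)\|_\infty \le C\|\temp(t)\|_{W^{1,\infty}} \le C(\temp)$. Next, since $\eltemp - I_h\temp \in \fspace$, the inverse inequality for piecewise-linear finite element functions on quasi-uniform meshes gives
\[
\|\nabla(\eltemp - I_h\temp)\|_\infty \;\le\; C\, h^{-d/2}\, \|\nabla(\eltemp - I_h\temp)\|.
\]
The $L^2$-norm on the right is controlled via the triangle inequality,
\[
\|\nabla(\eltemp - I_h\temp)\| \;\le\; \|\nabla(\eltemp - \temp)\| \;+\; \|\nabla(\temp - I_h\temp)\| \;\le\; C\,h\,\|\temp\|_2,
\]
where the first term is bounded by Lemma \ref{lemma:num-est.lemma2} and the second by the interpolation estimate (\ref{eq:num-est.interpolant-error-2}). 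Combining the two yields $\|\nabla(\eltemp - I_h\temp)\|_\infty \le C h^{1-d/2}\|\temp\|_2$, which is uniformly bounded for $d\le 2$; together with the bound on $\nabla I_h\temp$ this gives (\ref{eq:num-est.lemma3-1}).

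The delicate point is the factor $h^{1-d/2}$: it is harmless in two space dimensions but degrades in $d=3$. The cleanest way to close this gap, and the route I would take if the dimension is not restricted, is to replace the crude inverse-inequality argument by a $W^{1,\infty}$-stability estimate for the elliptic projection in the spirit of Schatz--Wahlbin, which yields $\|\nabla(\eltemp - \temp)\|_\infty \le C h\,|\log h|\,\|\temp\|_{W^{2,\infty}}$ under mild regularity on $\partial\Omega$ and quasi-uniformity of $\Triangulation$; the extra logarithm is absorbed in the generic constant $C(\temp)$. The main obstacle is precisely this step: Lemma \ref{lemma:num-est.lemma2} only gives $L^2$-type control of $\nabla\rho$, so the passage to the $L^\infty$-norm inevitably requires either the inverse inequality (with a dimensional proviso) or the stronger pointwise finite element theory; in both cases the regularity $\temp(t)\in H^2(\Omega)\cap W^{1,\infty}(\Omega)$ supplied by \ref{eq:num-est.assumption-2} and the regularity theory for (\ref{eq:num-est.elliptic-projection}) is essential for the constant $C(\temp)$ to be finite.
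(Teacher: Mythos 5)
Your proposal follows essentially the same route as the paper's proof: split $\nabla\eltemp$ through the interpolant $I_h\temp$, apply an inverse inequality to the finite element part $\nabla(\eltemp-I_h\temp)$, and bound its $L^2$-norm by combining Lemma \ref{lemma:num-est.lemma2} with the interpolation estimate (\ref{eq:num-est.interpolant-error-2}). You are in fact somewhat more careful than the paper, which writes the inverse estimate with the factor $h^{-1}$ (tacitly assuming $d=2$, where $\nabla\varphi$ constant per triangle gives exactly that scaling) and leaves the stability bound $\|\nabla I_h\temp\|_\infty\le C(\temp)$ implicit, whereas you make both the dimensional restriction and that term explicit and indicate the Schatz--Wahlbin alternative for $d=3$.
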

\begin{proof}
  We rely now on the inverse estimate:
  \begin{align}
    \label{eq:num-est.lemma3-2}
    &\| \nabla \varphi \| _{\infty}\le Ch^{-1}\| \nabla \varphi \| &&\forall \varphi \in \fspace
  \end{align}
  The statement (\ref{eq:num-est.lemma3-2}) is trivial to prove for
  linear approximation functions, since in this case $\nabla \varphi $ is constant
  on each triangle.  Using Lemma \ref{lemma:num-est.lemma2} and the
  known error estimate for $I_h\temp$, we have:
  \begin{eqnarray}
    \| \nabla (\eltemp-I_h\temp)\| _\infty &\le & Ch^{-1}\| \nabla (\eltemp-I_h\temp)\| \nonumber\\
    &\le & Ch^{-1}(\| \nabla \rho\| +\| \nabla (I_h\temp-\temp)\| )\le C(\temp).
  \end{eqnarray}

\end{proof}
The main result on the \emph{a priori} error control for the semi-discrete FEM approximation to our original system is given in the next Theorem. 
\begin{theorem}\label{ssdd}
  Let $(\temp,\coll_i,\dep_i)$ solve (\ref{eq:num-est.weak-beg})-(\ref{eq:num-est.weak-end}) and
  $(\temp^h,\coll_i^h,\dep_i^h)$ solve (\ref{eq:num-est.semi-weak-beg})-(\ref{eq:num-est.semi-weak-end}),
  and let assumptions \ref{eq:num-est.assumption-1}-\ref{eq:num-est.assumption-2} hold.
  Then the following inequalities hold:
  \begin{align}
    &\| \temp^h(t)-\temp(t)\| \le C\| \temp^{0,h}-\temp^0\| +C(\temp)h^2&&t\in \Time,\\
    &\| \coll_i^h(t)-\coll_i(t)\| \le C\| \coll_i^{0,h}-\coll_i^0\| +C(\coll_i)h^2&&t\in \Time,i\inRange{N}.
  \end{align}
\end{theorem}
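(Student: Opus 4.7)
The plan is to follow Thomée's classical elliptic projection strategy \cite{Thomee} anticipated by the authors in (\ref{eq:num-est.error-split}). I decompose each error as
\[
\temp^h-\temp = \psi_\temp+\rho_\temp,\qquad \coll_i^h-\coll_i = \psi_{\coll_i}+\rho_{\coll_i},
\]
with $\rho_\temp:=\eltemp-\temp$ and $\rho_{\coll_i}:=\elcoll_i-\coll_i$ the elliptic projection errors, where $\elcoll_i$ is the obvious analogue of (\ref{eq:num-est.elliptic-projection}) obtained by swapping the Soret drift for the Dufour drift, and $\psi_\temp,\psi_{\coll_i}\in\fspace$. Lemma \ref{lemma:num-est.lemma2} applied to both projections already delivers $\|\rho_\temp(t)\|+\|\partial _t\rho_\temp(t)\|\le C(\temp)h^2$ together with the analogue for $\rho_{\coll_i}$, so the whole task reduces to bounding the $\fspace$-components $\psi_\temp$ and $\psi_{\coll_i}$.

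To derive the evolution equation for $\psi_\temp$ I would subtract the continuous weak formulation (\ref{eq:num-est.weak-beg})--(\ref{eq:num-est.weak-end}) from its semidiscrete counterpart (\ref{eq:num-est.semi-weak-beg})--(\ref{eq:num-est.semi-weak-end}) with test function $\varphi\in\fspace$. Using the algebraic splitting
\[
\nabla^\delta\coll_i^h\!\cdot\!\nabla\temp^h - \nabla^\delta\coll_i\!\cdot\!\nabla\temp = \nabla^\delta(\coll_i^h-\coll_i)\!\cdot\!\nabla\temp^h + \nabla^\delta\coll_i\!\cdot\!\nabla(\temp^h-\temp)
\]
and then substituting $\temp^h-\temp=\psi_\temp+\rho_\temp$, the combination $(\Cond\nabla\rho_\temp,\nabla\varphi)-\sum_i\Soret_i(\nabla^\delta\coll_i\cdot\nabla\rho_\temp,\varphi)$ vanishes exactly by the defining relation (\ref{eq:num-est.elliptic-projection}) of $\eltemp$ --- this is the whole reason for building the convective drift into the projection. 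Testing the remaining identity with $\varphi=\psi_\temp$, using the coercivity from \ref{eq:num-est.assumption-1}, the smoothing bound $\|\nabla^\delta f\|_{L^\infty}\le C\|f\|$ implicit in (\ref{eq:mollified-gradient-property}), and the uniform bound $\|\nabla\eltemp\|_\infty\le C(\temp)$ from Lemma \ref{lemma:num-est.lemma3} (employed via $\|\nabla\temp^h\|\le\|\nabla\psi_\temp\|+\|\nabla\eltemp\|$ plus Young's inequality to absorb the $\nabla\psi_\temp$ contribution into the coercive term), I arrive at
\[
\tfrac{d}{dt}\|\psi_\temp\|^2 + m\|\nabla\psi_\temp\|^2 \le C\Bigl(\|\psi_\temp\|^2 + \sum_{i=1}^N\|\psi_{\coll_i}\|^2\Bigr) + C(\temp,\coll)h^4.
\]
Repeating the argument on each colloidal equation yields the symmetric inequality with $\psi_\temp$ and $\psi_{\coll_i}$ exchanged: the Smoluchowski production $R_i$ is globally Lipschitz on the physically relevant range and contributes only terms linear in the $\psi_{\coll_k}$'s, while the linear deposition $\Dca_i\coll_i^h-\Dcb_i\dep_i^h$ is treated by inserting the Duhamel formula (\ref{eq:num-est.dep-analytical}), so that $\|\dep_i^h-\dep_i\|$ is controlled by $\int_0^t\|\coll_i^h(s)-\coll_i(s)\|\,ds$ plus the initial datum error, a form Gronwall absorbs.

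Summing the temperature inequality with the $N$ colloidal inequalities on $\bar{\Time}$ and invoking Gronwall closes the coupled loop and produces
\[
\|\psi_\temp(t)\|+\sum_{i=1}^N\|\psi_{\coll_i}(t)\| \le e^{Ct}\Bigl(\|\psi_\temp(0)\|+\sum_{i=1}^N\|\psi_{\coll_i}(0)\|\Bigr) + C(\temp,\coll)h^2.
\]
The triangle inequality, the $\rho$-bounds of Lemma \ref{lemma:num-est.lemma2}, and the initial-data splitting $\|\psi_\temp(0)\|\le\|\temp^{0,h}-\temp^0\|+\|\rho_\temp(0)\|$ (and similarly for the colloidal initial data) then deliver the two stated estimates. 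The main obstacle I anticipate is the cross-coupling term $\nabla^\delta(\coll_i^h-\coll_i)\cdot\nabla\temp^h$: if one applied Cauchy--Schwarz naively on the gradient factor, a bare $\|\nabla(\coll_i^h-\coll_i)\|$ would surface that could not be absorbed without losing a factor $h^{-1}$ through an inverse estimate. The second line of (\ref{eq:mollified-gradient-property}), which trades the $\nabla^\delta$ for a smoothing $L^2\to L^\infty$ bound, is precisely what permits the coupled Gronwall argument to close at the optimal $h^2$ order.
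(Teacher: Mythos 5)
Your proposal is correct and follows essentially the same route as the paper: the Thom\'ee-style splitting into the drift-augmented elliptic projection error $\rho$ and the discrete part $\psi$, the bounds of Lemma \ref{lemma:num-est.lemma2} and the $L^\infty$ gradient bound of Lemma \ref{lemma:num-est.lemma3}, the algebraic splitting of the Soret/Dufour cross terms, testing with $\psi$, and a Gronwall argument, with the deposition variable handled through (\ref{eq:num-est.dep-analytical}). The only difference is that you close the coupled temperature--colloid system explicitly by summing the two differential inequalities before applying Gronwall, which makes precise the step the paper compresses into the citation of the bound $\| \coll_i^h-\coll_i\| \le C(\coll_i)h^2$.
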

\begin{proof}
  With an error splitting  as in (\ref{eq:num-est.error-split}), it is 
  enough to show a suitable upper bound for $\psi:=\temp^h-\eltemp$. We proceed in the following manner: 
  \begin{align*}
    &(\partial _t\psi,\varphi )+(\Cond\nabla \psi,\nabla \varphi )=(\partial _t\temp^h,\varphi )+(\Cond\nabla \temp^h,\nabla \varphi )-\sum_{i=1}^N (\Soret_i\nabla ^\delta \coll_i^h \cdot  \temp^h,\varphi )\\
    &\quad+\sum_{i=1}^N (\Soret_i\nabla ^\delta \coll_i^h \cdot  \temp^h,\varphi )-(\partial _t\eltemp,\varphi )-(\Cond\nabla \eltemp,\nabla \varphi )\\
    &\quad=-(\partial _t(\temp+\rho),\varphi )-(\Cond\nabla (\temp+\rho),\nabla \varphi )+\sum_{i=1}^N (\Soret_i\nabla ^\delta \coll_i^h \cdot  \temp^h,\varphi )\\
    &\quad=-(\partial _t\rho,\varphi )-(\Cond\nabla \rho,\nabla \varphi )+\sum_{i=1}^N(\Soret_i\nabla ^\delta\coll_i\cdot \nabla \rho,\varphi )\\
    &\qquad+\sum_{i=1}^N (\Soret_i(\nabla ^\delta \coll_i^h \cdot  \nabla \temp^h-\nabla ^\delta \coll_i \cdot  \nabla \temp-\nabla ^\delta\coll_i\cdot \nabla \rho),\varphi ).
  \end{align*}
  After eliminating the terms that vanish due to the definition of the
  elliptic projection, we obtain the following identity:
  \begin{align}
    \label{eq:num-est.theorem-1}
    &(\partial _t\psi,\varphi )+(\Cond\nabla \psi,\nabla \varphi )\nonumber\\
      &\quad=
        -(\partial _t\rho,\varphi )
        +\sum_{i=1}^N (\Soret_i(\nabla ^\delta \coll_i^h \cdot  \nabla \temp^h-\nabla ^\delta \coll_i \cdot  (\nabla \temp+\nabla \rho)),\varphi ).
  \end{align}
  We can deal with the second term on the right hand side of (\ref{eq:num-est.theorem-1}) as
  follows:
  \begin{align*}
    &\nabla ^\delta \coll_i^h \cdot  \nabla \temp^h-\nabla ^\delta \coll_i \cdot  \nabla \temp - \nabla ^\delta\coll_i\cdot \nabla \rho\\
    &\quad=
      (\nabla ^\delta\coll_i^h-\nabla ^\delta\coll_i)\cdot \nabla \temp^h+\nabla ^\delta\coll_i\cdot (\nabla \temp^h-\nabla \temp-\nabla \rho)\\
    &\quad=(\nabla ^\delta\coll_i^h-\nabla ^\delta\coll_i)(\nabla \psi+\nabla \eltemp)+\nabla ^\delta\coll_i\cdot \nabla \psi\\
  \end{align*}
  Now using $\varphi :=\psi$ as a test function and relying on the bound
  $$\| \nabla \eltemp\| _\infty<C(\temp)$$ (available cf. Lemma \ref{lemma:num-est.lemma3}), we obtain:
  \begin{align*}
    &\frac{1}{2}\partial _t\| \psi\| ^2+m\| \nabla \psi\| ^2\le \frac{1}{2}\| \partial _t\rho\| ^2+\frac{1}{2}\| \psi\| ^2\\
    &\quad+\sum_{i=1}^N(C\| u_i^h-u_i\| ^2+\varepsilon \| \nabla \psi\| ^2+\varepsilon \| u_i\| _\infty (\| \nabla \rho\| ^2+\| \nabla \psi\| ^2) + \| \psi\| ^2).
  \end{align*}
  Gronwall's inequality gives
  \begin{align*}
    \| \psi(t)\| ^2\le \| \psi(0)\| ^2+C\int_0^t (\| \partial _t\rho\| ^2+\| \nabla \rho\| ^2+\sum_{i=1}^N\| \coll_i^h-\coll_i\| ^2).
  \end{align*}
  The estimate
  \begin{align*}
    \| \psi(0)\| \le \| \temp^{h,0}-\temp^0\| +\| \eltemp(0)-\temp^0\| \le \| \temp^{h,0}-\temp^0\| +Ch^2\| \temp^0\| _2,
  \end{align*}
  together with the estimate $\| \coll_i^h-\coll_i\| \le C(\coll)h^2$ give the statement of the Theorem.
\end{proof}

\section{Fully discrete error analysis}\label{fully}

Let $\tau>0$ to be a small enough time step and use $t_n:=\tau n$ while
denoting $\temp^n:=\temp(t_n)$ and $\coll_i^n:=\coll_i(t_n)$. The
discrete in space approximations of $\temp^n$ and $\coll_i^n$ are
denoted as $\temp^{h,n}$ and $\coll_i^{h,n}$, respectively.


\begin{definition}
  The triplet $(\temp^{h,n},\coll_i^{h,n},\dep_i^{h,n})$ is a discrete solution to (\ref{eq:num-est.strong-formulation-beg})-(\ref{eq:num-est.strong-formulation-end})
  if the following identities hold for all $n\inRange{N}$ and $\varphi \in \fspace$:
  \begin{align}
    \label{eq:num-est.semi-weak-beg}
    &\frac{1}{\tau}(\temp^{h,n+1}-\temp^{h,n},\varphi )
      + (\Cond\nabla \temp^{h,n+1},\nabla \varphi )\nonumber\\
    &\qquad
      - \sum_{i=1}^N(\Soret_i\nabla ^\delta\coll_i^{h,n}\cdot \nabla \temp^{h,n+1},\varphi )=0,\\
    \label{eq:num-est.semi-weak-beg-1}
    &\frac{1}{\tau}(\coll_i^{h,n+1}-\coll_i^{h,n},\varphi )
      + (\Diff_i\nabla \coll_i^{h,n+1},\nabla \varphi )
      - (\Dufour_i\nabla ^\delta\temp^{h,n}\cdot \nabla \coll_i^{h,n+1},\varphi )\nonumber\\
    &\qquad
      + (\Dca_i\coll_i^{h,n+1}-\Dcb_i\dep_i^{h,n+1},\varphi )=(R_i(\coll^{h,n}),\varphi ),\\
    &\frac{1}{\tau}(\dep_i^{h,n+1}-\dep_i^{h,n},\varphi ) = (\Dca_i\coll_i^{h,n+1}-\Dcb_i\dep_i^{h,n+1},\varphi ),\\
    &\temp^{h,0}=\temp^{0,h},\\
    &\coll_i^{h,0}=\coll_i^{0,h},\\
    \label{eq:num-est.semi-weak-end}
    &\dep_i^{h,0}=\dep_i^{0,h}.
  \end{align}
Here, $\temp^{0,h}$, $\coll_i^{0,h}$, and $\dep_i^{0,h}$ are the
  approximations of $\temp^0$, $\coll_i^0$, and $\dep_i^0$ respectively
  in the finite dimensional space $\fspace$.
\end{definition}
\begin{remark}
  To treat (\ref{eq:num-est.semi-weak-beg}) and
  (\ref{eq:num-est.semi-weak-beg-1}), we use a semi-implicit
  discretization very much in the spirit of
  Ref. \cite{lakkis2013implicit}. Note however that other options for
  time discretization are possible.
\end{remark}
\begin{theorem}\label{FD}
  Let $(\temp,\coll_i,\dep_i)$ solve (\ref{eq:num-est.weak-beg})-(\ref{eq:num-est.weak-end}) and
  $(\temp^h,\coll_i^h,\dep_i^h)$ solve (\ref{eq:num-est.semi-weak-beg})-(\ref{eq:num-est.semi-weak-end}),
  and assumptions \ref{eq:num-est.assumption-1}-\ref{eq:num-est.assumption-2} hold.
  Then the following inequality holds:
  \begin{align}
    \label{eq:num-dis.estimate-all}
    &\| \temp^{h,n}-\temp^{n}\| +\sum_{i=1}^N\| \coll_i^{h,n}-\coll_i^{n}\|
      +\sum_{i=1}^N\| \dep_i^{h,n}-\dep_i^{n}\| \nonumber\\
    &\quad\le
      C_1\| \temp^{h,0}-\temp^0\|
      +C_2\sum_{i=1}^N\| \coll_i^{h,0}-\coll_i^0\|
      +C_3\sum_{i=1}^N\| \dep_i^{h,0}-\dep_i^0\| \nonumber\\
    &\qquad
      +C_4(h^2+\tau).
  \end{align}
  The constants  $C_1,\ldots,C_4$  entering (\ref{eq:num-dis.estimate-all}) depend on controllable norms of $\temp$, $\coll_i$, but are independent of $h$ and $\tau$.
\end{theorem}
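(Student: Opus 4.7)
The plan is to mirror the semidiscrete argument of Theorem \ref{ssdd}, splitting each error as in \eqref{eq:num-est.error-split}, but now track the additional consistency contribution of order $\tau$ coming from the semi-implicit Euler discretization. Concretely, for the temperature I would write
\[ \temp^{h,n}-\temp^n = (\temp^{h,n}-\eltemp(t_n))+(\eltemp(t_n)-\temp^n) = \psi^n+\rho^n, \]
and similarly introduce $\psi_i^n$ for each $\coll_i^{h,n}-\coll_i^n$. By Lemma \ref{lemma:num-est.lemma2} the terms $\rho^n$ are uniformly $O(h^2)$ in $L^2$, so the task reduces to bounding $\psi^n$ and $\psi_i^n$.

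Next I would derive the error equation for $\psi^{n+1}$ by testing the fully discrete scheme \eqref{eq:num-est.semi-weak-beg}--\eqref{eq:num-est.semi-weak-beg-1} against $\varphi\in\fspace$, subtracting \eqref{eq:num-est.weak-beg} evaluated at $t_{n+1}$, and using the elliptic projection identity \eqref{eq:num-est.elliptic-projection} to eliminate the leading diffusion contribution. This yields a relation of the form
\begin{align*}
\bigl(\tfrac{1}{\tau}(\psi^{n+1}-\psi^n),\varphi\bigr)+(\Cond\nabla\psi^{n+1},\nabla\varphi)
&= -\bigl(\tfrac{1}{\tau}(\rho^{n+1}-\rho^n),\varphi\bigr) + (\omega^n,\varphi) \\
&\quad + \sum_{i=1}^N\Soret_i\bigl(\nabla^\delta\coll_i^{h,n}\!\cdot\!\nabla\temp^{h,n+1}-\nabla^\delta\coll_i^{n+1}\!\cdot\!\nabla\temp^{n+1},\varphi\bigr),
\end{align*}
where $\omega^n := \partial_t\temp(t_{n+1})-\tfrac{1}{\tau}(\temp^{n+1}-\temp^n)$ is the standard backward-Euler truncation term, bounded in $L^2$ by $C\tau\|\partial_{tt}\temp\|_{L^\infty(I;L^2)}$ via Taylor's formula. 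The cross-coupling term is split, as in the semidiscrete proof, into $(\nabla^\delta\coll_i^{h,n}-\nabla^\delta\coll_i^{n+1})(\nabla\psi^{n+1}+\nabla\eltemp(t_{n+1}))$ plus $\nabla^\delta\coll_i^{n+1}\!\cdot\!\nabla\psi^{n+1}$, and further the difference $\coll_i^{h,n}-\coll_i^{n+1}$ is decomposed as $(\coll_i^{h,n}-\coll_i^n)+(\coll_i^n-\coll_i^{n+1})$, the second piece contributing an extra $O(\tau)$. The $L^\infty$-bound $\|\nabla\eltemp\|_\infty\le C(\temp)$ from Lemma \ref{lemma:num-est.lemma3} and the mollifier property \eqref{eq:mollified-gradient-property} allow me to dominate these terms by a small $\varepsilon\|\nabla\psi^{n+1}\|^2$ plus constants times $\|\psi^{n+1}\|^2$, $\|\psi_i^n\|^2$, $h^4$ and $\tau^2$.

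Testing with $\varphi:=\psi^{n+1}$, using $2(\psi^{n+1}-\psi^n,\psi^{n+1})\ge\|\psi^{n+1}\|^2-\|\psi^n\|^2$, absorbing $\varepsilon\|\nabla\psi^{n+1}\|^2$ into $m\|\nabla\psi^{n+1}\|^2$, and performing the analogous procedure for each $\coll_i$ equation (with the roles of $\temp$ and $\coll_i$ swapped in the coupling and with the Lipschitz continuity of $R_i$ handling $R_i(\coll^{h,n})-R_i(\coll^{n+1})$), yields the discrete energy inequality
\[ \Delta_{n+1}^2 - \Delta_n^2 \le C\tau\,\Delta_{n+1}^2 + C\tau\,\Delta_n^2 + C\tau(h^4+\tau^2), \qquad \Delta_n^2 := \|\psi^n\|^2+\sum_{i=1}^N\|\psi_i^n\|^2. \]
For $\tau$ small enough so that $C\tau<1$, the discrete Gronwall lemma then bounds $\Delta_n$ by $\Delta_0 + C(h^2+\tau)$, and reinserting the split \eqref{eq:num-est.error-split} together with $\|\rho^n\|\le Ch^2$ and $\|\eltemp(0)-\temp^0\|\le Ch^2\|\temp^0\|_2$ produces the $\temp$ and $\coll_i$ contributions in \eqref{eq:num-dis.estimate-all}. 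Finally, the $\dep_i$ estimate follows by subtracting the discrete ODE for $\dep_i^{h,n}$ from the exact ODE \eqref{eq:num-est.strong-formulation-3}, yielding a one-step recursion whose right-hand side involves $\|\coll_i^{h,n+1}-\coll_i^{n+1}\|$ and the $O(\tau)$ Euler truncation; the already-derived bound for $\coll_i$ then closes the argument via another discrete Gronwall.

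The main obstacle I anticipate is the treatment of the semi-implicit cross terms: because $\nabla^\delta\coll_i^{h,n}$ is evaluated at the \emph{old} time level while $\nabla\temp^{h,n+1}$ is at the new level, the difference with the continuous coupling at $t_{n+1}$ generates a genuine $O(\tau)$ perturbation that must be absorbed without destabilizing the discrete Gronwall step. Controlling it rigorously requires the $L^\infty$-in-time regularity of $\partial_t\coll_i$ and $\partial_t\temp$ (hence assumption \ref{eq:num-est.assumption-2}) plus the global-in-time $L^\infty$ bound on $\nabla\eltemp$, so that the $\varepsilon$-Young splitting indeed leaves the factor in front of $\|\nabla\psi^{n+1}\|^2$ strictly below the ellipticity constant $m$ uniformly in $n$.
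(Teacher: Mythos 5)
Your proposal follows essentially the same route as the paper's proof: a projection-based error splitting, a discrete error equation whose right-hand side collects the backward-Euler truncation term (order $\tau$), the projection time-difference term (order $h^2$ via Lemma \ref{lemma:num-est.lemma2}), and the semi-implicit cross-coupling terms treated by splitting across time levels with $\varepsilon$-Young and the $L^\infty$ bound of Lemma \ref{lemma:num-est.lemma3}, followed by a discrete Gronwall argument for $\tau$ small. The only deviations are cosmetic: you reuse the convection-including elliptic projection \eqref{eq:num-est.elliptic-projection} where the paper switches to the plain Ritz projections \eqref{eq:num-dis.ritz-temp}--\eqref{eq:num-dis.ritz-coll}, and you spell out the $\dep_i$ and $R_i$ steps that the paper leaves implicit; neither changes the argument.
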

\begin{proof}
  Similar with the methodology of the proof of the semidiscrete {\em a priori} error estimates, we split the error terms into two parts:
  \begin{align}
    \label{eq:num-dis.split-temp}
    &\temp^{h,n}-\temp^n=\rho^{\temp,n}+\psi^{\temp,n}:=(\temp^{h,n}-R_h\temp^n)+(R_h\temp^n-\temp^n),\\
    \label{eq:num-dis.split-coll}
    &\coll_i^{h,n}-\coll_i^n=\rho^{\coll_i,n}+\psi^{\coll_i,n}:=(\coll_i^{h,n}-R_h\coll_i^n)+(R_h\coll_i^n-\coll_i^n),
  \end{align}
  where $R_h\temp$ and $R_h\coll_i$ are the Ritz projections defined by:
  \begin{align}
    \label{eq:num-dis.ritz-temp}
    &(\Cond\nabla (R_h\temp-\temp),\nabla \varphi )=0,&&\forall \varphi \in \fspace,\\
    \label{eq:num-dis.ritz-coll}
    &(\Diff_i\nabla (R_h\coll_i-\coll_i),\nabla \varphi )=0,&&\forall \varphi \in \fspace,i\inRange{N}.
  \end{align}
  Here, $\psi^{\temp,n}$ and $\psi^{\coll_i,n}$ satisfy the following bounds:
  \begin{align}
    \label{eq:num-dis.elliptic-err-temp}
    &\| \psi^{\temp,n}\| \le Ch^2\| \temp^n\| _2,\\
    \label{eq:num-dis.elliptic-err-coll}
    &\| \psi^{\coll_i,n}\| \le Ch^2\| \coll_i^n\| _2,
  \end{align}
  so it remains to bound from above $\rho^{\temp,n}$ and $\rho^{\coll_i,n}$.
  We can write for $\rho^{\temp,n}$ the following identities:
  \newcommand\err[1]{\rho^{\temp,#1}}
  \newcommand\elerr[1]{\psi^{\temp,#1}}
  \begin{align*}
    &\frac{1}{\tau}(\err{n+1}-\err{n},\varphi )+(\Cond\nabla \err{n+1},\nabla \varphi )=
      \frac{1}{\tau}(\temp^{h,n+1}-\temp^{h,n},\varphi )+(\Cond\nabla \temp^{h,n+1},\nabla \varphi )\\
    &\qquad
      - \sum_{i=1}^N(\Soret_i\nabla ^\delta\coll_i^{h,n}\cdot \nabla \temp^{h,n+1},\varphi )
      + \sum_{i=1}^N(\Soret_i\nabla ^\delta\coll_i^{h,n}\cdot \nabla \temp^{h,n+1},\varphi )\\
    &\qquad
      -\frac{1}{\tau}(R_h\temp^{n+1}-R_h\temp^{n},\varphi )-(\Cond\nabla R_h\temp^{n+1},\nabla \varphi )\\
    &\quad
      = \sum_{i=1}^N(\Soret_i\nabla ^\delta\coll_i^{h,n}\cdot \nabla \temp^{h,n+1},\varphi )
      -\frac{1}{\tau}(R_h\temp^{n+1}-R_h\temp^{n},\varphi )-(\Cond\nabla \temp^{n+1},\nabla \varphi )\\
    &\quad
      = \sum_{i=1}^N(\Soret_i\nabla ^\delta\coll_i^{h,n}\cdot \nabla \temp^{h,n+1},\varphi )
      -\frac{1}{\tau}(R_h\temp^{n+1}-R_h\temp^{n},\varphi )\\
    &\qquad
      +(\partial _t\temp^{n+1},\varphi )
      -\sum_{i=1}^N(\Soret_i\nabla ^\delta\coll_i^{n+1}\cdot \nabla \temp^{n+1},\varphi ).
  \end{align*}
  After re-arranging the terms in the former expression, we obtain:
  \begin{align*}
    &\frac{1}{\tau}(\err{n+1}-\err{n},\varphi )+(\Cond\nabla \err{n+1},\nabla \varphi )\\
    &\quad
      =
      \underbrace{\sum_{i=1}^N(T_i(\nabla ^\delta\coll_i^{h,n}\cdot \nabla \temp^{h,n+1}-\nabla ^\delta\coll_i^{n+1}\cdot \nabla \temp^{n+1}),\varphi )}_A\\
    &\qquad
      +
      \underbrace{(\partial _t\temp^{n+1}-\frac{1}{\tau}(\temp^{n+1}-\temp^n),\varphi )}_B
      -
      \underbrace{\frac{1}{\tau}(\elerr{n+1}-\elerr{n},\varphi )}_C.
  \end{align*}
  Let us deal first with estimating the term $C$, then $B$, and finally, the term $A$. 
  
  To estimate the term $C$, we  use our semidiscrete estimate for $\| \partial _t\psi\| $ stated in Lemma \ref{lemma:num-est.lemma2},
  we get:
  \begin{align*}
    \| \frac{1}{\tau}(\elerr{n+1}-\elerr{n})\| =\| \frac{1}{\tau}\int_{t^n}^{t^{n+1}}\partial _t\psi^\temp\| \le C_C(\temp,\coll)h^2.
  \end{align*}
  The term $B$ can be estimated as follows:
  \begin{align*}
    B
    =(\frac{1}{\tau}\int_{t^n}^{t^{n+1}}(s-t^n)\partial _{tt}\temp(s)ds,\varphi )
    \le  \frac{\tau}{2} (\sup_{[t^n,t^{n+1}]}|\partial _{tt}\temp|)\| \varphi \|=C_B(\temp)\tau\| \varphi \| .
  \end{align*}
  Finally, to tackle the term $A$, we proceed as follows:
  \begin{align*}
    &A=(\nabla ^\delta\coll_i^{h,n}\cdot \nabla \temp^{h,n+1}-\nabla ^\delta\coll_i^{n+1}\cdot \nabla \temp^{n+1},\varphi )\\
    &\quad=(\nabla ^\delta\coll_i^{h,n}\cdot (\nabla \temp^{h,n+1}-\nabla \temp^{n+1})+\nabla \temp^{n+1}\cdot (\nabla ^\delta\coll_i^{h,n}-\nabla ^\delta\coll_i^{n+1}),\varphi )\\
    &\quad\le \varepsilon \| \coll_{i}^{h,n}\| _{\infty}(\| \nabla \rho^{n+1}\| ^2+\| \nabla \psi^{n+1}\| ^2)+C_{\varepsilon}\| \varphi \| ^2\\
    &\qquad
      +
      \underbrace{(\nabla \temp^{n+1}\cdot (\nabla ^\delta\coll_i^{h,n}-\nabla ^\delta\coll_i^{n+1}),\varphi )}_D.
  \end{align*}
  At its turn, the term $D$ can be expressed as:
  \begin{align*}
    &D=
      (\nabla \temp^{n+1}\cdot (\nabla ^\delta\coll_i^{h,n}-\nabla ^\delta\coll_i^{n}),\varphi )
      +
      (\nabla \temp^{n+1}\cdot (\nabla ^\delta\coll_i^{n}-\nabla ^\delta\coll_i^{n+1}),\varphi )\\
    &\quad\le \| \nabla \temp^{n+1}\| _\infty(\varepsilon \| \nabla ^\delta(\coll_i^{h,n}-\coll_i^n)\| ^2+C_\varepsilon \| \varphi \| ^2)
      + \underbrace{(\nabla \temp^{n+1}\cdot \int_{t^n}^{t^{n+1}}\partial _t\nabla ^\delta\coll_i,\varphi )}_E.\\
  \end{align*}
  Finally, the term $E$ can be estimated as:
  \begin{align*}
    E\le \| \nabla \temp^{n+1}\| _{\infty}\| \partial _t\nabla ^\delta\coll_i\| _\infty \tau \| \varphi \|.
  \end{align*}
  Adding together all the terms, and then substituting $\varphi :=\err{n+1}$ we finally obtain:
  \begin{align}
    \nonumber
    &\frac{1}{\tau}\| \err{n+1}\| ^2
      + m\| \nabla \err{n+1}\| ^2\le \frac{1}{\tau}\| \err{n}\| ^2
      + (C_B(\temp) \tau)^2\\
    \nonumber
    &\qquad
      + (C_C(\temp) h^2)^2
      + \varepsilon \| \coll_{i}^{h,n}\| _{\infty}(\| \nabla \err{n+1}\| ^2+\| \nabla \psi^{n+1}\| ^2)\\
    \label{eq:num-dis.theorem1-1}
    &\qquad
      + C_D\varepsilon \| \nabla ^\delta(\coll_i^{h,n}-\coll_i^n)\| ^2
      + (C_E(\coll,\temp)\tau)^2 + C \| \err{n+1}\| ^2\nonumber\\
    &\quad
      := C\| \err{n+1}\| ^2+R_n,
  \end{align}
  where the reminder $R_n$ is defined by:
  \begin{align*}
    R_n&:=\frac{1}{\tau}\| \err{n}\| ^2
      + (C_B(\temp) \tau)^2
      + (C_C(\temp) h^2)^2
      + \varepsilon \| \coll_{i}^{h,n}\| _{\infty}(\| \nabla \err{n+1}\| ^2+\| \nabla \psi^{n+1}\| ^2)\\
    &\qquad
      + C_D\varepsilon \| \nabla ^\delta(\coll_i^{h,n}-\coll_i^n)\| ^2
      + (C_E(\coll,\temp)\tau)^2
  \end{align*}

  For $R_n$ it holds:
  \begin{align*}
    R_n\le C(\temp,\coll) (h^2+\tau)^2.
  \end{align*}
  Note that we can derive a similar estimate for $\rho^{\coll_i,n+1}$, which we  then add to
  (\ref{eq:num-dis.theorem1-1}).
  
  To conclude, we denote $$e^n:=\| \err{n}\| ^2+\sum_{i=1}^N\| \rho^{\coll_i,n}\| ^2,$$
  to obtain the short structure
  \begin{align*}
    \frac{1}{\tau}e^{n+1}\le \frac{1}{\tau}e^n+C(e^{n+1} + R_n).
  \end{align*}
  From here it follows that:
  \begin{align*}
    (1-C\tau)e^{n+1}\le e^{n}+C\tau R_n.
  \end{align*}
  For sufficiently small $\tau$,  we can instead write the expression
  \begin{align*}
    e^{n+1}\le (1+C\tau)e^n+C\tau R_n.
  \end{align*}
  Iterating the later inequality, we obtain
  \begin{align*}
    e^{n+1}\le (1+C\tau)^{n+1}e^0+C\tau\sum_{j=1}^n R_j.
  \end{align*}
  Finally, this argument yields
  \begin{align*}
    e^{n+1}\le C\| \temp^{h,0}-\temp^0\| +C\| \coll_i^{h,0}-\coll_i^0\| +C(\temp,\coll)(h^2+\tau),
  \end{align*}
  which proves the Theorem \ref{FD}.
\end{proof}

\section*{Acknowledgments} We thank C. Venkataraman and O. Lakkis (both at Sussex) for very useful discussions on the fully discrete error control of multiscale parabolic systems and to I.S. Pop (Eindhoven) for the reading a preliminary version of these results.
AM and OK gratefully acknowledge the financial  support by the European Union through the Initial Training Network {\em Fronts and Interfaces in Science and Technology} of the Seventh Framework Programme (grant agreement number 238702). 

\ifdefined\included\else\printbibliography\end{document}\fi
\let\included\undefined